\numberwithin{equation}{section}
\theoremstyle{plain}
\newtheorem{theorem}[equation]{Theorem}
\newtheorem{lemma}[equation]{Lemma}
\newtheorem{proposition}[equation]{Proposition}
\theoremstyle{definition}
\newtheorem{definition}[equation]{Definition}
\theoremstyle{remark} 
\newtheorem{remark}[equation]{Remark}
\newcommand*{\Hilm}[1][H]{\mathcal #1}
\newcommand{\Bound}{\mathbb B}
\newcommand*{\Cst}{\mathrm{C}^*}
\newcommand{\norm}[1]{\lvert\!\lvert #1\rvert\!\rvert}
\newcommand{\C}{\mathbb{C}}
\newcommand{\N}{\mathbb{N}}
\newcommand*{\nb}{\nobreakdash}
\newcommand*{\defeq}{\mathrel{\vcentcolon=}}
\title[Partial Crossed Products]{Inductive limits of partial crossed products}
\author{Md Amir Hossain} \email{mdamirhossain18@gmail.com}
\address{The Institute of Mathematical Sciences, A CI of Homi Bhabha National Institute, 4th cross street, CIT Campus, Taramani, Chennai, 600113, India.
}
\keywords{Partial crossed product, Inductive limit, Globalization, Rokhlin dimension}
\thanks{\emph{2020 Mathematics Subject classification.}  46L55, 18A30}
\begin{document}
	\begin{abstract}
		Let \(\big((A^{(i)}, G, \alpha^{(i)}), \phi_i\big)_{i \in \N}\) be an inductive sequence of partial dynamical systems. 
		We prove the existence of an induced partial action \(\alpha\) of \(G\) on the inductive limit \(A=\varinjlim A^{(i)}\). We call \(\alpha\) the inductive limit partial action. Furthermore, we show the corresponding partial crossed product~\(A\rtimes_{\alpha}G\) is canonically isomorphic to \(\varinjlim A^{(i)}\rtimes_{\alpha^{(i)}}G\). We also study the globalization of the inductive limit partial action \(\alpha\), its finite Rokhlin dimension and tracial states on~\(A\rtimes_{\alpha}G\).  
	\end{abstract}
	
	\maketitle
	
	\section{Introduction}
	
	Partial actions of groups on \(\Cst\)\nb-algebras---also called partial dynamical systems---were introduced independently by Exel~\cite{Exel-1994-Circle-action-Cst-alg} and McClanahan~\cite{McClanahan-1995-K-theory-Par-act}. Given such a system, one can construct a new \(\Cst\)\nb-algebra, the partial crossed product, which encodes essential information about the underlying partial dynamics. In particular, the partial dynamical system and its partial crossed product share the same representation theory~\cite[Theorem 13.2]{Exel2017Book-Partial-action-Fell-bundle}.
	
	Many interesting \(\Cst\)-algebras can be described as partial crossed products (see, for example,~\cite{Exel1994-Bunce-Deddens-alg-as-crossed-product-by-partial-Automorphism, Exel-1995-AF-alg-and-partial-action}). Such descriptions often make it possible to compute their 
	K-theory~\cite{Exel-1994-Circle-action-Cst-alg} and to analyze properties such as the amenability of the groupoid naturally associated with the partial action~\cite{Anantharaman-D:2020Partial-Action-of-Gpd}.

	The inductive limit is a fundamental tool for constructing a new \(\Cst\)-algebra from existing ones. 
	It is particularly useful for computing the K-theory of the inductive limit \(\Cst\)\nb-algebra and for understanding its structural properties.
	This article focuses on studying the inductive limit of partial actions. We define an inductive sequence of partial dynamical systems \(\big((A^{(i)}, G, \alpha^{(i)}), \phi_i\big)_{i\in \N}\) (see~Definition~\ref{def-ind-lim-par-dyn}), where \(G\) is a discrete group. We then establish the existence of a unique partial action \(\alpha\) of \(G\) on \(A= \varinjlim A^{(i)}\). We call \(\alpha\) the \emph{inductive limit partial action}
	and the triple \((A,G,\alpha)\) is called \emph{inductive limit partial dynamical system}. Our main result (Theorem~\ref{thm-ind-par-crosd-pord}) establishes an isomorphism between \(A\rtimes_\alpha G\) and the inductive limit \(\varinjlim A^{(i)}\rtimes_{\alpha^{(i)}}G\).
	
	A natural source of partial actions arises from restricting global actions to (possibly non-invariant) ideals. Such partial actions are called globalizable.
	However, not every partial action of a group on a \(\Cst\)\nb-algebra is globalizable (see~\cite[p.\ 213]{Exel-1994-Circle-action-Cst-alg}), so it is an interesting question to determine whether a given partial action is globalizable. As an application of our main result, we investigate the globalization of inductive limit partial action. 
	
	We have also studied the Rokhlin dimension of the inductive limit partial action and tracial states on the associated partial crossed product.

	\smallskip
	
	\paragraph{\itshape Structure of the article:}
	Section~\ref{sec:Prelim} contains preliminaries about partial dynamical systems and partial crossed products. Section~\ref{sec:ind-lim-par-act} contains the main theorem of the article, which identifies the partial crossed product of the inductive limit partial action with the inductive limit of individual partial crossed products (see Theorem~\ref{thm-ind-par-crosd-pord}). In Section~\ref{sec:application}, we present three applications of our main result: Theorem~\ref{thm:globalize} proves that if every partial action of an inductive sequence is globalizable, then the inductive limit partial action is also globalizable. Propositions~\ref{prop-rok-dim} and~\ref{prop:trace} discuss the finite Rokhlin dimension of the inductive limit partial action and tracial states on the partial crossed product associated to the inductive limit partial action.   
	
	\section{Preliminaries}\label{sec:Prelim}
	
	We follow Exel's book~\cite{Exel2017Book-Partial-action-Fell-bundle} for partial actions. Throughout the article, we fix a discrete group \(G\) and denote its identity element by \(e\). The set of all bounded linear operators on a separable Hilbert space \(\Hilm\) is denoted by \(\Bound(\Hilm)\).
	
	\begin{definition}[{\cite[Definition 11.4]{Exel2017Book-Partial-action-Fell-bundle}}]
		A \emph{partial action} of \(G\) on a \(\Cst\)\nb-algebra \(A\) is a pair
		\(\alpha= (\{A_g\}_{g\in G}, \{\alpha_{g}\}_{g\in G})\), consisting of closed two-sided ideals \(\{A_g\}_{g\in G}\) of \(A\) and \(^*\)\nb-isomorphisms \(\alpha_{g}\colon A_{g^{-1}} \to A_{g}\) for \(g\in G\) satisfying 
		\begin{enumerate}
			\item \(A_e= A\) and \(\alpha_{e} \colon A_e\to A_e\) is the identity map;
			\item\label{def-par-act-2}  \(\alpha_{g}\circ \alpha_{h} \subseteq \alpha_{gh}\) for \(g,h\in G\).
		\end{enumerate} 
	\end{definition}
	The domain of \(\alpha_{g}\circ \alpha_{h}\) is given by \(\{x\in A_{h^{-1}}: \alpha_h(x) \in A_{g^{-1}}\} = \alpha^{-1}_h(A_{g^{-1}})\). Condition~\eqref{def-par-act-2} of the above definition says that \(\alpha_g(\alpha_h(x)) = \alpha_{gh}(x)\) for \(x\in \textup{dom}(\alpha_g\circ \alpha_h)\), that is, \(\alpha_{gh}\) is an extension of \(\alpha_g\circ \alpha_h\).

	We call \((A,G, \alpha)\) a \(\Cst\)\nb-algebraic partial dynamical system. Let \(A\rtimes_{alg}G\) denote the \(^*\)\nb-algebra consisting of all finite formal sums \(\sum_{g\in G} a_g\delta_g\) with \(a_g\in A_g\). The multiplication and involution on \(A\rtimes_{alg}G\) are given by
	\[
	a\delta_g \cdot b\delta_h \defeq a\alpha_{g}(b)\delta_{gh} \quad \textup{and} \quad (a\delta_g)^* \defeq \alpha_{g^{-1}}(a^*)\delta_{g^{-1}}
	\]
	for \(a\in A_g, b\in A_h\) and \(g,h\in G\). If \(p\) is a  \(\Cst\)\nb-seminorm on \(A\rtimes_{alg}G\), then
	\[
	p(a) \leq \sum_{g\in G} \norm{a_g}
	\] 
	for \(a=\sum_{g\in G} a_g\delta_g \in A\rtimes_{alg}G\) (see~\cite[Proposition 11.9]{Exel2017Book-Partial-action-Fell-bundle}). Define a norm on \(A\rtimes_{alg}G\) by 
	\[
	\norm{a}_{\text{max}}\defeq \sup\{p(a) : p \textup{ is a \(\Cst\)\nb-seminorm on } A\rtimes_{alg}G\}. 
	\]
	The completion of \(A\rtimes_{alg}G\) with respect to the norm \(\norm{\cdot}_{\text{max}}\) is called the \emph{partial crossed product} and is denoted by \(A\rtimes_{\alpha}G\).
	
	Consider the partial dynamical system \((A,G,\alpha)\). The map \(i\colon A \to A\rtimes_{\alpha}G\) defined by \(a\mapsto a\delta_e\) is an embedding. Moreover, there exists a conditional expectation \(E\colon A\rtimes_{\alpha}G \to A\) by 
	\begin{equation}\label{equ-cond-exp}
		E(a\delta_g) = \begin{cases}
			a & \textup{ if } g=e,\\
			0 & \textup{ if } g\neq e
		\end{cases}
	\end{equation}
	for \(a\in A_g\) and \(g\in G\).
	
	\begin{definition}\label{def-partial-rep}
		Let \(G\) be a discrete group and let \(A\) be a unital \(\Cst\)\nb-algebra. A map \(u\colon G \to A\) by \(g\mapsto u_g\)  is called a \emph{partial representation} of \(G\) in \(A\) if it satisfies
		\begin{enumerate}
			\item \(u_e = 1\),
			\item  \(u_g u_h u_{h^{-1}} = u_{gh} u_{h^{-1}}\),
			\item \(u_{g^{-1}} u_gu_h = u_{g^{-1}} u_{gh}\),
			\item  \(u_{g^{-1}} = u_{g}^*\)
		\end{enumerate}
		for \(g,h\in G\).
	\end{definition}
	In Definition~\ref{def-partial-rep}, if \(A = \Bound(\Hilm)\) for a separable Hilbert space \(\Hilm\), then we call \(u\) a partial group representation on \(\Hilm\). 
	\begin{definition}\label{def-sub-rep}
		Let \(u^{(i)} \colon G \to \Bound(\Hilm_i)\) be partial group representations for \(i=1,2\). We call \(u^{(1)}\) a \emph{sub-partial representation} of \(u^{(2)}\) if \(\Hilm_1\subseteq \Hilm_2\) and 
		\[
		u^{(2)}_{g}(h) = u^{(1)}_{g}(h)
		\] 
		for \(g\in G\) and \(h\in \Hilm_1\).
	\end{definition}

	\begin{lemma}\label{lem:ind-gp-par-Hilb}
		Suppose for each \(i\in \N\), we are given a partial representation \(u^{(i)} \colon G \to \Bound(\Hilm_i)\), and if \(i\leq j\), then \(u^{(i)}\) is a sub-partial representation of \(u^{(j)}\). Then there exists a unique partial representation \(u\colon G\to \Bound(\Hilm)\) with \(u|_{\Hilm_i} = u^{(i)}\) and \(\Hilm = \overline{\bigcup_{i=1}^{\infty} \Hilm_i}\).  	
	\end{lemma}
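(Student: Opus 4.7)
The plan is to build \(u_g\) first as a linear map on the algebraic union \(\Hilm_0 \defeq \bigcup_{i=1}^{\infty} \Hilm_i\), show it is a contraction, and extend by continuity to \(\Hilm=\overline{\Hilm_0}\). For \(g\in G\) and \(h\in \Hilm_0\), I would set \(u_g(h)\defeq u^{(i)}_g(h)\) for any \(i\) with \(h\in \Hilm_i\). Independence of the choice of \(i\) is precisely the sub-partial-representation hypothesis: if \(h\in \Hilm_i\cap \Hilm_j\) and (say) \(i\leq j\), then \(u^{(j)}_g(h)=u^{(i)}_g(h)\). Linearity of \(u_g\) on \(\Hilm_0\) is inherited from each \(u^{(i)}_g\).

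The boundedness step rests on the fact that each \(u^{(i)}_g\) is a partial isometry. Combining axioms (1), (2), and (4) of Definition~\ref{def-partial-rep} (taking \(h=g^{-1}\) in (2)) yields \(u^{(i)}_g (u^{(i)}_g)^* u^{(i)}_g = u^{(i)}_g\), so \(\norm{u^{(i)}_g}\leq 1\). Hence \(\norm{u_g(h)}\leq \norm{h}\) for all \(h\in \Hilm_0\), and \(u_g\) extends uniquely to a contraction in \(\Bound(\Hilm)\), which I continue to denote by \(u_g\).

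Verifying the four axioms of Definition~\ref{def-partial-rep} for the family \(\{u_g\}_{g\in G}\) is then a density-and-continuity argument. Axiom (1) is immediate. Axiom (4), \(u_{g^{-1}}=u_g^*\), follows from the identity \(\inpro{u_g h}{k}=\inpro{u^{(i)}_g h}{k}=\inpro{h}{u^{(i)}_{g^{-1}} k}=\inpro{h}{u_{g^{-1}} k}\) for \(h,k\) in any common \(\Hilm_i\), extended by continuity. Axioms (2) and (3) are checked by evaluating both sides at a vector \(h\in \Hilm_i\): the corresponding identities already hold inside \(\Bound(\Hilm_i)\) for \(u^{(i)}\), so it suffices to observe that every intermediate composition stays in \(\Hilm_i\); then continuity promotes the identities to all of \(\Hilm\). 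Uniqueness is automatic, since any partial representation on \(\Hilm\) that restricts to \(u^{(i)}\) on each \(\Hilm_i\) must agree with \(u\) on \(\Hilm_0\) and hence on \(\Hilm\).

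The only spot requiring genuine care is the bookkeeping in axioms (2) and (3): one must check that each \(\Hilm_i\) is invariant under every \(u^{(i)}_g\) (which is implicit in the hypothesis \(u^{(i)}\colon G\to \Bound(\Hilm_i)\)), so that compositions like \(u_g u_h u_{h^{-1}}\) and \(u_{g^{-1}} u_g u_h\) applied to a vector of \(\Hilm_i\) never leave \(\Hilm_i\) and the identities from \(u^{(i)}\) can be invoked term by term. I do not anticipate any deeper obstacle.
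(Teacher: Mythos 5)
Your proposal is correct and follows essentially the same route as the paper: define \(u_g\) on the dense union \(\bigcup_i \Hilm_i\) via the \(u^{(i)}_g\), use the sub-partial-representation hypothesis for well-definedness, extend by boundedness, and verify the axioms and uniqueness by density. You actually supply more detail than the paper does on the boundedness step (the partial-isometry identity \(u_g u_g^* u_g = u_g\) giving \(\norm{u_g}\leq 1\)) and on checking the axioms, which the paper compresses into a single sentence.
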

	\begin{proof}
		Let \(g\in G\). Define \(u_g \colon \bigcup_{i=1}^{\infty} \Hilm_i \to \bigcup_{i=1}^{\infty} \Hilm_i\) by 
		\[
		u_g(h_i) = u^{(i)}_g(h_i)
		\]
		for \(h_i\in \Hilm_i\). Since \(u^{(i)}\) is a sub-partial representation of \(u^{(j)}\) whenever \(i\leq j\), the map \(u_g\) is well-defined, and \(u_g\) is bounded on \(\bigcup_{i=1}^{\infty} \Hilm_i\). Thus, \(u_g\) can be extended to a bounded linear map on \(\Hilm\). Since each \(u^{(i)}\) is a partial representation, by definition, \(u\) is a partial representation.
		
		If \(v\) is another partial representation of \(G\) in \(\Bound(\Hilm)\) such that \(v|_{\Hilm_i} = u^{(i)}\), then \(v\) agree with \(u\) on the dense subspace \(\bigcup_{i=1}^{\infty}\Hilm_i\) and hence \(u=v\).
	\end{proof}
	A covariant representation of a partial dynamical system \((A, G,\alpha)\) on a Hilbert space \(\Hilm\) is a pair \((\pi, u),\) where \(\pi\colon A \to \Bound(\Hilm)\) is a \(^*\)\nb-homomorphism and \(u\colon G \to \Bound(\Hilm)\) is a partial representation satisfying
	\[
	u_g\pi(a)u_{g^{-1}} = \pi(\alpha_g(a))
	\]
	 for \(a\in A_{g^{-1}}, g\in G\). Such a covariant representation \((\pi, u)\) gives a unique representation, called the \emph{integrated form}, \(\pi \rtimes u \colon A\rtimes_{\alpha}G \to \Bound(\Hilm)\) characterized by \(\pi\rtimes u(a\delta_g) = \pi(a)u_g\) for \(a\in A_{g}, g\in G\). Theorem~13.2 of~\cite{Exel2017Book-Partial-action-Fell-bundle} ensures that given a nondegenerate representation \(\rho\colon A\rtimes_{\alpha}G \to \Bound(\Hilm) \), there exists a unique covariant representation \((\pi,u)\) of \((A, G, \alpha)\) on \(\Hilm\) such that \(\rho = \pi \rtimes u\).  
	
	We conclude this section with the following useful lemma.
	\begin{lemma}[{\cite[Lemma 9.4.26]{Phillips2011-Crossed-prod-book}}]\label{lem:extend-rep-H}
		Let \(\pi\colon A \to \Bound(\Hilm)\) be a representation of a \(\Cst\)\nb-algebra \(A\) on a Hilbert space \(\Hilm\). Let \(\Hilm_0\) be the closed linear span of \(\pi(A)\Hilm\). Then
		\begin{enumerate}
			\item \(\Hilm_0\) is an invariant subspace for \(\pi\);
			\item the representation \(\pi_0 = \pi|_{\Hilm_0}\) is nondegenerate;
			\item \(\Hilm_0^{\perp} = \{h\in \Hilm : \pi(a)h = 0 \textup{ for all } a\in A \}\);
			\item \(\pi\) is the direct sum of \(\pi_0\) and the zero representation on \(\Hilm_0^{\perp}\);
			\item  for all \(a\in A\), we have \(\norm{\pi_0(a)} = \norm{\pi(a)}\);
			\item \(\ker(\pi_0) = \ker(\pi)\);
			\item if \(\pi\) is the direct sum of a nondegenerate representation on a Hilbert space \(\Hilm_1\) and the zero representation on \(\Hilm_2\), then \(\Hilm= \Hilm_1\) and \(\Hilm^{\perp} = \Hilm_2\).
		\end{enumerate}
	\end{lemma}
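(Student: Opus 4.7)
The plan is to prove the items in order, with (1)--(4) forming the bulk of the work and (5)--(7) following as formal consequences. My starting point is that since \(A\) has an approximate identity \((e_\lambda)\), one has \(\pi(a)h = \lim_\lambda \pi(a)\pi(e_\lambda)h\) for every \(a \in A\) and \(h \in \Hilm\); this Cohen factorization-type observation will be used repeatedly.

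For (1), given \(b \in A\) and a vector of the form \(\pi(a)h \in \pi(A)\Hilm\), I have \(\pi(b)\pi(a)h = \pi(ba)h \in \pi(A)\Hilm \subseteq \Hilm_0\); by continuity of \(\pi(b)\) and the closedness of \(\Hilm_0\), the invariance extends to all of \(\Hilm_0\). For (3), the identity \(\inpro{\pi(a)k}{h} = \inpro{k}{\pi(a^*)h}\) shows \(h \in \Hilm_0^\perp\) iff \(\pi(a^*)h = 0\) for all \(a\), which (since \(a^*\) ranges over all of \(A\)) is the stated characterization. For (2), I need \(\overline{\mathrm{span}}\, \pi(A)\Hilm_0 = \Hilm_0\); the inclusion \(\supseteq\) follows from (1), and for \(\subseteq\) it suffices to show each \(\pi(a)h\) lies in \(\overline{\mathrm{span}}\,\pi(A)\Hilm_0\), which the approximate-identity calculation \(\pi(a)h = \lim_\lambda \pi(a)\bigl(\pi(e_\lambda)h\bigr)\) supplies, since \(\pi(e_\lambda)h \in \Hilm_0\). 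Part (4) then reads off directly from (1) and (3): \(\Hilm\) decomposes orthogonally as \(\Hilm_0 \oplus \Hilm_0^\perp\), each summand is invariant, and \(\pi\) vanishes identically on the second.

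The remaining items are formal: (5) and (6) follow from (4), because adding a zero representation cannot change norms or kernels. For (7), suppose \(\pi = \rho_1 \oplus 0\) with \(\rho_1\) nondegenerate on \(\Hilm_1\) and zero on \(\Hilm_2\). Nondegeneracy gives \(\overline{\mathrm{span}}\,\rho_1(A)\Hilm_1 = \Hilm_1\), and since \(\pi\) is zero on \(\Hilm_2\), one has \(\pi(A)\Hilm = \rho_1(A)\Hilm_1 \subseteq \Hilm_1\), so \(\Hilm_0 \subseteq \Hilm_1\); conversely any \(h \in \Hilm_1\) is a limit of vectors \(\rho_1(a)k = \pi(a)k \in \pi(A)\Hilm\), so \(\Hilm_1 \subseteq \Hilm_0\), and \(\Hilm_0^\perp = \Hilm_2\) follows.

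The only step requiring any care is the nondegeneracy in (2), where the role of the approximate identity of \(A\) is essential; everything else is routine bookkeeping with invariant subspaces and adjoints. No step presents a genuine obstacle.
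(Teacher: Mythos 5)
Your proof is correct: the approximate-identity argument for nondegeneracy in (2), the adjoint computation for (3), and the formal deductions of (4)--(7) are all sound, and this is the standard argument for this result. Note that the paper itself offers no proof of this lemma --- it is quoted verbatim with a citation to \cite[Lemma 9.4.26]{Phillips2011-Crossed-prod-book} --- so there is no in-paper argument to compare against; your write-up supplies exactly the routine verification that the cited source contains. (One small point: the conclusion of item (7) as printed should read \(\Hilm_0 = \Hilm_1\) and \(\Hilm_0^{\perp} = \Hilm_2\), which is what you correctly prove.)
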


	\section{Inductive limits of partial crossed products}\label{sec:ind-lim-par-act}

	An inductive sequence in a category \(\mathcal{C}\) is a sequence of objects \((A^{(i)})_{i\in \N}\) in \(\mathcal{C}\) together with morphisms \(\phi_i\colon A^{(i)} \to A^{(i+1)}\) in \(\mathcal{C}\) for each \(i\). We shall denote an inductive sequence by \((A^{(i)}, \phi_i)_{i\in \N}\).
	
	\begin{definition}[{\cite[Definition 6.2.2]{Rordam2002K-theory-book}}]
		An inductive limit of an inductive sequence \((A^{(i)}, \phi_i)_{i \in \N}\) in a category \(\mathcal{C}\) is a pair \((A, (\mu_{i})_{i \in \N})\), where \(A\) is an object in \(\mathcal{C}\) and \(\mu_i\colon A^{(i)} \to A\) is a morphism in \(\mathcal{C}\) such that \(\mu_{i+1}\circ \phi_i = \mu_{i}\) for \(i\in \N\). Furthermore, this limit is unique in the following sense: if there is another limit \((B,(\lambda_i)_{i \in \N})\), then there is one and only one morphism \(\lambda \colon A \to B\) satisfying \(\lambda \circ \mu_i = \lambda_i\) for \(i \in \N\).  
	\end{definition}
	
	Inductive limits may or may not exist in a category.
	For instance, inductive limits do not exist in the category of finite sets. In contrast, inductive limits do exist in the category of \(\Cst\)\nb-algebras (cf.~\cite[Proposition 6.2.4]{Rordam2002K-theory-book}).
	\begin{definition}\label{def-ind-lim-par-dyn}
		We call \(\big((A^{(i)}, G, \alpha^{(i)}), \phi_i\big)_{i\in \N}\) an inductive sequence of partial dynamical systems if 
		\begin{enumerate}
			\item \((A^{(i)}, G, \alpha^{(i)})_{i\in \N}\) is a partial dynamical system for \(i\in \N\);
			\item \(\phi_i\colon A^{(i)} \to A^{(i+1)}\) is a \(G\)\nb-equivariant \(^*\)\nb-homomorphism for \(i\in \N\);
			\item \((A^{(i)}, \phi_i)_{i\in \N}\) is an inductive sequence of \(\Cst\)\nb-algebras.
		\end{enumerate} 
	\end{definition}
	
	\begin{proposition}\label{prop-ind-dyn-sys}
		Let \(G\) be a discrete group. Consider an inductive sequence of partial dynamical systems \(\big((A^{(i)}, G, \alpha^{(i)}), \phi_i\big)_{i\in \N}\) with \(A = \varinjlim A^{(i)}\). Then there exists a unique partial action \(\alpha\) of \(G\) on \(A\) satisfying \(\alpha_{g} = \varinjlim \alpha^{(i)}_{g}\) for \(g\in G\).
	\end{proposition}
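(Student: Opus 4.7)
The plan is to build the partial action $\alpha$ ideal-by-ideal using the universal property of $\Cst$\nb-algebraic inductive limits. Let $\mu_i\colon A^{(i)} \to A$ denote the canonical maps into the limit. By $G$\nb-equivariance, each connecting map restricts to $\phi_i|_{A^{(i)}_g}\colon A^{(i)}_g \to A^{(i+1)}_g$, so $(A^{(i)}_g, \phi_i|_{A^{(i)}_g})_{i\in \N}$ is an inductive subsystem of $\Cst$\nb-algebras, and I would define the closed two-sided ideal
\[
A_g \defeq \overline{\bigcup_{i\in\N} \mu_i(A^{(i)}_g)} \subseteq A,
\]
which is canonically isomorphic to $\varinjlim A^{(i)}_g$; write $\mu_i^g\colon A^{(i)}_g \to A_g$ for the induced canonical maps.

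To construct $\alpha_g\colon A_{g^{-1}} \to A_g$, I would apply the universal property of $\varinjlim A^{(i)}_{g^{-1}}$ to the family $\mu_i^g \circ \alpha^{(i)}_g$. The compatibility $\mu_{i+1}^g \circ \alpha^{(i+1)}_g \circ \phi_i|_{A^{(i)}_{g^{-1}}} = \mu_i^g \circ \alpha^{(i)}_g$ is exactly $G$\nb-equivariance together with $\mu_{i+1}^g \circ \phi_i|_{A^{(i)}_g} = \mu_i^g$, so a unique $^*$\nb-homomorphism $\alpha_g$ with $\alpha_g \circ \mu_i^{g^{-1}} = \mu_i^g \circ \alpha^{(i)}_g$ exists. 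Performing the analogous construction with $g$ replaced by $g^{-1}$ and checking agreement on the dense subset $\bigcup_i \mu_i^g(A^{(i)}_g)$ shows that $\alpha_g$ and $\alpha_{g^{-1}}$ are mutually inverse, so $\alpha_g$ is a $^*$\nb-isomorphism. The identity axioms $A_e = A$ and $\alpha_e = \mathrm{id}_A$ are immediate from $A^{(i)}_e = A^{(i)}$ and $\alpha^{(i)}_e = \mathrm{id}_{A^{(i)}}$.

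For the extension property $\alpha_g \circ \alpha_h \subseteq \alpha_{gh}$, the step I expect to require the most care is identifying the domain. I would first establish $A_g \cap A_h = \varinjlim (A^{(i)}_g \cap A^{(i)}_h)$ by using the fact that the intersection of closed two-sided ideals equals the closure of their product, together with the identity $\mu_i(a)\mu_j(b) = \mu_k(\phi_{ik}(a)\phi_{jk}(b))$ for $k\geq i,j$, which pushes any such product into a single stage. This, combined with the fact that $\alpha_{h^{-1}}$ carries the limit of $A^{(i)}_h \cap A^{(i)}_{g^{-1}}$ to the limit of $\alpha^{(i)}_{h^{-1}}(A^{(i)}_h \cap A^{(i)}_{g^{-1}})$, shows that $\mathrm{dom}(\alpha_g \circ \alpha_h) = \alpha_{h^{-1}}(A_h \cap A_{g^{-1}})$ is the closure of $\bigcup_i \mu_i(\mathrm{dom}(\alpha^{(i)}_g \circ \alpha^{(i)}_h))$. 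On this dense set, the finite-level inclusion $\alpha^{(i)}_g \circ \alpha^{(i)}_h \subseteq \alpha^{(i)}_{gh}$ together with $\alpha_g \circ \mu_i = \mu_i \circ \alpha^{(i)}_g$ gives $\alpha_g(\alpha_h(\mu_i(a_i))) = \alpha_{gh}(\mu_i(a_i))$, and continuity extends the equality to the whole domain. Uniqueness is then immediate: any other partial action $\beta$ with $\beta_g = \varinjlim \alpha^{(i)}_g$ has the same ideals $A_g$ and agrees with $\alpha_g$ on a dense subset of $A_{g^{-1}}$.
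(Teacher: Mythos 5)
Your construction is the same as the paper's: the ideals \(A_g=\overline{\bigcup_i\mu_i(A^{(i)}_g)}\) arise as inductive limits of the \(A^{(i)}_g\) via equivariance of the connecting maps, the maps \(\alpha_g\) come from the universal property applied to the compatible family \(\mu_i\circ\alpha^{(i)}_g\), and the identity axiom is immediate; your extra check that \(\alpha_g\) and \(\alpha_{g^{-1}}\) are mutually inverse is a harmless elaboration of the paper's appeal to the universal property. The one genuine divergence is in verifying \(\alpha_g\circ\alpha_h\subseteq\alpha_{gh}\). The paper argues element-wise: for \(x=\mu_i(a_{h^{-1}})\) with \(\alpha_h(x)\in A_{g^{-1}}\) it passes directly to \(a_{h^{-1}}\in\textup{dom}(\alpha^{(i)}_g\circ\alpha^{(i)}_h)\) and then extends by continuity. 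You instead identify the whole domain at once, proving \(A_h\cap A_{g^{-1}}=\varinjlim\bigl(A^{(i)}_h\cap A^{(i)}_{g^{-1}}\bigr)\) from the identity ``intersection of closed ideals equals the closure of their product'' together with pushing any product \(\mu_i(a)\mu_j(b)\) into a single stage, so that \(\textup{dom}(\alpha_g\circ\alpha_h)=\alpha_{h^{-1}}(A_h\cap A_{g^{-1}})\) is exhibited as the closure of \(\bigcup_i\mu_i\bigl(\textup{dom}(\alpha^{(i)}_g\circ\alpha^{(i)}_h)\bigr)\); the finite-stage inclusions and continuity then finish the argument. This costs you one extra lemma about intersections of limit ideals, but it buys a cleaner justification of the most delicate step: the paper's intermediate claim that \(\mu_i(\alpha^{(i)}_h(a_{h^{-1}}))\in A_{g^{-1}}\) forces \(\alpha^{(i)}_h(a_{h^{-1}})\in A^{(i)}_{g^{-1}}\) requires knowing that \(\mu_i^{-1}(A_{g^{-1}})\cap A^{(i)}\) is no larger than \(A^{(i)}_{g^{-1}}\), a point your density description of the domain sidesteps entirely. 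Both routes establish the same statement.
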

	\begin{proof}
		Suppose \((A,(\mu_i)_{i\in \N})\) is the inductive limit of \((A^{(i)}, \phi_i)_{i\in \N}\). Let 
		\[
		\alpha^{(i)} = \big(\{A^{(i)}_g\}_{g\in G}, \{\alpha^{(i)}_{g}\}_{g\in G}\big)
		\]
		for \(i\in \N\). Since \(\phi_i\) is \(G\)\nb-equivariant, we have \(\phi_i(A^{(i)}_g) \subseteq A^{(i+1)}_g\) for \(i\in \N\) and \(g\in G\). Therefore, \(\big(A^{(i)}_g, \phi_i\big)_{i\in \N}\) is an inductive sequence with the inductive limit \((A_g, (\mu_i)_{i\in \N})\) for \(g\in G\) (we are using the same notations \(\phi_i\) and \(\mu_i\) in the place of \(\phi_i|_{A^{(i)}_g}\) and \(\mu_i|_{A^{(i)}_g}\), respectively). From~\cite[Proposition 6.2.4]{Rordam2002K-theory-book}, we have \(A_g = \overline{\bigcup_{i=1}^{\infty}\mu_i(A^{(i)}_g)}\) for \(g\in G\). Since \(A^{(i)}_g\) is a closed two-sided ideal of \(A^{(i)}\) for \(i\in  \N\) and \(g\in G\), the limit \(A_g = \varinjlim A^{(i)}_g\) is a closed two-sided ideal of \(A = \varinjlim A^{(i)}\). The \(G\)\nb-equivariance of \(\phi_i\) gives the following commutative diagram 
		\begin{center}
			\begin{tikzcd}
				A^{(1)}_{g^{-1}} \arrow{r}{\phi_1} \arrow{d}{\alpha^{(1)}_{g}}
				& A^{(2)}_{g^{-1}} \arrow{r}{\phi_2} \arrow{d}{\alpha^{(2)}_{g}} & A^{(3)}_{g^{-1}} \arrow[r, dashed] \arrow{d}{\alpha^{(3)}_{g}}
				& A^{(i)}_{g^{-1}} \arrow{r}{\phi_i} \arrow{d}{\alpha^{(i)}_{g}} & A^{(i+1)}_{g^{-1}} \arrow[r, dashed] \arrow{d}{\alpha^{(i+1)}_{g}}
				& \varinjlim A^{(i)}_{g^{-1}} \arrow{d}{\alpha_{g}}\\
				A^{(1)}_g \arrow{r}{\phi_1} & A^{(2)}_g \arrow{r}{\phi_2} & A^{(3)}_g \arrow[r, dashed] & A^{(i)}_g \arrow{r}{\phi_i} & A^{(i+1)}_g \arrow[r, dashed] & \varinjlim A^{(i)}_g.
			\end{tikzcd}
		\end{center}
		Moreover, since \(\alpha^{(i)}_{g} \colon A^{(i)}_{g^{-1}} \to A^{(i)}_g\) is a \(^*\)\nb-isomorphism for \(i\in \N\) and \(g\in G\),
		the universal property of the inductive limit ensures the existence of a
		 unique \(^*\)\nb-isomorphism 
		 \[
		 \alpha_g\colon A_{g^{-1}} = \varinjlim A^{(i)}_{g^{-1}}\to A_g = \varinjlim A^{(i)}_g
		 \]
		 such that 
		\begin{equation}\label{equ-G-equiv-mu}
			\alpha_g (\mu_i(a)) = \mu_i(\alpha^{(i)}_{g}(a)) \quad \textup{for } a\in A^{(i)}_{g^{-1}}.
		\end{equation}
		
		We now show that \(\alpha = (\{A_g\}_{g\in G}, \{\alpha_{g}\}_{g\in G})\) is a partial action of \(G\) on \(A\). Let \(e\) be the identity element of \(G\). Since \(A^{(i)}_e = A^{(i)}\) and \(\alpha^{(i)}_e = id_{A^{(i)}}\), we have
		\[
		A_e = \varinjlim A^{(i)}_e = \varinjlim A^{(i)} = A \quad \textup{and} \quad \alpha_e = id_{A}.
		\]
	   Let \(g,h\in G\) and \(x\in \textup{dom}(\alpha_g\circ\alpha_h)\). First, we assume \(x = \mu_i(a_{h^{-1}})\) for \(a_{h^{-1}} \in A^{(i)}_{h^{-1}}\) with \(\alpha_h(\mu_i(a_{h^{-1}})) \in A_{g^{-1}} = \overline{\bigcup_{i=1}^{\infty} \mu_i(A^{(i)}_{g^{-1}})}\). 
		But Equation~\eqref{equ-G-equiv-mu} gives us \(\alpha_h(\mu_i(a_{h^{-1}})) = \mu_i(\alpha^{(i)}_h(a_{h^{-1}}))\). This ensures that \(a_{h^{-1}} \in \textup{dom} (\alpha^{(i)}_{g} \circ \alpha^{(i)}_{h})\). Since \(\alpha^{(i)}\) is a partial action, we have \(a_{h^{-1}} \in \textup{dom}(\alpha^{(i)}_{gh})\). Hence, \(x = \mu_i(a_{h^{-1}}) \in \textup{dom}(\alpha_{gh})\). Finally, using Equation~\eqref{equ-G-equiv-mu} we obtain
	\begin{align*}
			\alpha_g\circ \alpha_h(x) &= \alpha_g(\alpha_h(\mu_i(a_{h^{-1}}))) = \alpha_g\big( \mu_i(\alpha^{(i)}_h(a_{h^{-1}}))\big) = \mu_i(\alpha^{(i)}_g \circ \alpha^{(i)}_h(a_{h^{-1}})) \\
		&= \mu_i(\alpha^{(i)}_{gh} (a_{h^{-1}})) = \alpha_{gh}(\mu_i(a_{h^{-1}})) = \alpha_{gh}(x).
	\end{align*}
		Now, if \(x\in \alpha_{h^{-1}}\big(\overline{\bigcup_{i=1}^{\infty} \mu_{i}(A^{(i)}_{g^{-1}})} \big)\), then using the continuity of \(\alpha_g, \alpha_h\) and \(\mu_i\) we conclude that \(x\in \textup{dom}(\alpha_{gh})\) and \(\alpha_g\circ \alpha_h(x) = \alpha_{gh}(x)\).
		Hence, \(\alpha\) is a partial action of~\(G\) on~\(A\).
	\end{proof}
	
	\noindent The partial action \(\alpha\) constructed in Proposition~\ref{prop-ind-dyn-sys} is called \emph{the inductive limit partial action} of \(G\) on the inductive limit \(\Cst\)\nb-algebra \(A = \varinjlim A^{(i)}\). Accordingly, \((A,G, \alpha)\) is referred to as the inductive limit partial dynamical system.
	\begin{remark}
		Equation~\eqref{equ-G-equiv-mu} ensures that the map \(\mu_i \colon A^{(i)} \to A \) is \(G\)\nb-equivariant with respect to the partial dynamical system \((A^{(i)}, G, \alpha^{(i)})\) and the inductive limit partial dynamical system \((A, G,\alpha)\).
	\end{remark}
	
	\begin{theorem}\label{thm-ind-par-crosd-pord}
		Let \(\big((A^{(i)}, G, \alpha^{(i)}), \phi_i  \big)_{i\in \N}\) be an inductive sequence of partial dynamical systems with the inductive limit partial dynamical system \((A, G, \alpha)\) as in Proposition~\ref{prop-ind-dyn-sys}. Then there exists an inductive sequence of partial crossed product \(\big( A^{(i)} \rtimes_{\alpha^{(i)}}G, \psi_i \big)_{i \in \N}\) and 
		\[
		A\rtimes_{\alpha}G \cong \varinjlim A^{(i)} \rtimes_{\alpha^{(i)}} G.
		\]
	\end{theorem}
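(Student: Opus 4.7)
My plan is to combine functoriality of the partial crossed product with Exel's bijection (Theorem~13.2 of~\cite{Exel2017Book-Partial-action-Fell-bundle}) between nondegenerate representations of a partial crossed product and covariant representations of the underlying system; the two preliminary lemmas are tailored to provide the needed Hilbert\nb-space bookkeeping.

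First I construct the comparison map. Since each $\phi_i$ is $G$\nb-equivariant with $\phi_i(A^{(i)}_g)\subseteq A^{(i+1)}_g$, the formula $a\delta_g\mapsto\phi_i(a)\delta_g$ defines a $\ast$\nb-homomorphism on the algebraic crossed product which is contractive for every $\Cst$\nb-seminorm on the target and therefore extends to $\psi_i\colon A^{(i)}\rtimes_{\alpha^{(i)}}G\to A^{(i+1)}\rtimes_{\alpha^{(i+1)}}G$. Applying the same functoriality to $\mu_i$ produces $\nu_i\colon A^{(i)}\rtimes_{\alpha^{(i)}}G\to A\rtimes_\alpha G$, and $\mu_{i+1}\circ\phi_i=\mu_i$ yields $\nu_{i+1}\circ\psi_i=\nu_i$. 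Setting $B\defeq\varinjlim A^{(i)}\rtimes_{\alpha^{(i)}}G$ with canonical maps $\eta_i$, the universal property of the inductive limit produces a unique $\ast$\nb-homomorphism $\lambda\colon B\to A\rtimes_\alpha G$ with $\lambda\circ\eta_i=\nu_i$. Surjectivity of $\lambda$ is then immediate: its image contains every $\mu_i(a)\delta_g$ for $a\in A^{(i)}_g$, and since $A_g=\overline{\bigcup_i\mu_i(A^{(i)}_g)}$ by (the proof of) Proposition~\ref{prop-ind-dyn-sys}, finite sums of such elements are dense in $A\rtimes_{alg}G$ for the $\ell^1$ sum norm which dominates $\norm{\cdot}_{\text{max}}$.

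Injectivity is the substantive step. I would take a faithful nondegenerate representation $\sigma\colon B\to\Bound(\Hilm)$ and construct a covariant representation $(\pi,u)$ of $(A,G,\alpha)$ on $\Hilm$ with $(\pi\rtimes u)\circ\lambda=\sigma$; since $\sigma$ is faithful, this forces $\lambda$ to be injective. Setting $\sigma_i\defeq\sigma\circ\eta_i$, the $\ast$\nb-homomorphism $\pi\colon A\to\Bound(\Hilm)$ is forced by $\pi(\mu_i(a))\defeq\sigma_i(a\delta_e)$; a routine inductive-limit argument makes this well-defined and contractive on $\bigcup_i\mu_i(A^{(i)})$, and $\pi$ extends by continuity to $A$. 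For the partial representation $u$, let $\Hilm_i$ be the essential subspace of $\sigma_i$; Lemma~\ref{lem:extend-rep-H} together with Theorem~13.2 of~\cite{Exel2017Book-Partial-action-Fell-bundle} supplies a covariant representation $(\pi_i,u^{(i)})$ of $(A^{(i)},G,\alpha^{(i)})$ on $\Hilm_i$ whose integrated form is the nondegenerate compression of $\sigma_i$. The identity $\sigma_i=\sigma_{i+1}\circ\psi_i$ immediately gives $\Hilm_i\subseteq\Hilm_{i+1}$ and $\overline{\bigcup_i\Hilm_i}=\Hilm$; comparing $\sigma_{i+1}(\phi_i(a)\delta_g)=\sigma_i(a\delta_g)$ on vectors of $\Hilm_i$ with the aid of approximate identities of $A^{(i)}_g$ yields the key compatibility $u^{(i+1)}|_{\Hilm_i}=u^{(i)}$, and Lemma~\ref{lem:ind-gp-par-Hilb} then glues the $u^{(i)}$ into $u\colon G\to\Bound(\Hilm)$. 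Covariance of $(\pi,u)$ follows from the covariance of each $(\pi_i,u^{(i)})$ together with Equation~\eqref{equ-G-equiv-mu} and density, and the factorisation $(\pi\rtimes u)\circ\lambda\circ\eta_i=\sigma_i$ closes the argument.

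The main obstacle is the compatibility $u^{(i+1)}|_{\Hilm_i}=u^{(i)}$: the two partial representations are produced independently from approximate identities of the distinct ideals $A^{(i)}_g$ and $A^{(i+1)}_g$, and matching them on the nested essential subspaces requires exploiting that $\pi_{i+1}(\phi_i(A^{(i)}))$ annihilates the orthogonal complement $\Hilm_{i+1}\ominus\Hilm_i$, which is itself a consequence of $\sigma_i=\sigma_{i+1}\circ\psi_i$ together with the decomposition supplied by Lemma~\ref{lem:extend-rep-H}.
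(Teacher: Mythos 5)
Your proposal follows essentially the same route as the paper's proof: the connecting maps \(\psi_i\) come from functoriality of the partial crossed product, and the substantive direction is handled by representing the target algebra nondegenerately, invoking Theorem~13.2 of~\cite{Exel2017Book-Partial-action-Fell-bundle} on the essential subspaces \(\Hilm_i\), and gluing the resulting covariant representations via Lemmas~\ref{lem:extend-rep-H} and~\ref{lem:ind-gp-par-Hilb}. The only packaging difference is that you prove bijectivity of the canonical map \(\lambda\colon \varinjlim A^{(i)}\rtimes_{\alpha^{(i)}}G\to A\rtimes_{\alpha}G\), whereas the paper verifies the universal property directly; your injectivity argument is the paper's uniqueness argument specialized to a faithful nondegenerate representation of the limit, plus a short surjectivity check that the universal-property formulation makes unnecessary.

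The one step I would press you on is the compatibility \(u^{(i+1)}|_{\Hilm_i}=u^{(i)}\), which you rightly single out as the crux. Your approximate-identity computation only yields agreement on \(\overline{\pi_i(A^{(i)}_{g^{-1}})\Hilm_i}\), the initial space of \(u^{(i)}_g\); on its orthogonal complement inside \(\Hilm_i\) the operator \(u^{(i)}_g\) vanishes (this is the normalization forced by the uniqueness in Theorem~13.2), whereas \(u^{(i+1)}_g\) need not. Concretely, take \(G=\mathbb{Z}/2=\{e,g\}\), \(A^{(1)}=A^{(2)}=\C\), \(\phi_1=\mathrm{id}\), with \(A^{(1)}_g=0\) and \(A^{(2)}_g=\C\): this is a legitimate \(G\)\nb-equivariant map, yet \(u^{(1)}_g=0\) while \(u^{(2)}_g\neq 0\) on \(\Hilm_1=\Hilm_2\). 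The repair is to define \(u_g\) only from the compatible values on the nested subspaces \(\overline{\pi_i(A^{(i)}_{g^{-1}})\Hilm}\), whose closed union is \(\overline{\pi(A_{g^{-1}})\Hilm}\), and to extend by zero on the complement; covariance and the factorisation \((\pi\rtimes u)\circ\lambda\circ\eta_i=\sigma_i\) only ever evaluate \(u_g\) on that initial space, so the argument closes. Note that your proposed fix---that \(\pi_{i+1}(\phi_i(A^{(i)}))\) annihilates \(\Hilm_{i+1}\ominus\Hilm_i\)---addresses the compatibility of the \(\pi_i\), not of the \(u^{(i)}\). To be fair, the paper's proof asserts the same sub-partial-representation compatibility with no more justification, so this is a shared imprecision rather than a defect peculiar to your write-up.
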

	\begin{proof}
		Since \(\phi_i\colon A^{(i)} \to A^{(i+1)}\) is a \(G\)\nb-equivariant \(^*\)\nb-homomorphism, there exists a \(^*\)\nb-homomorphism \(\psi_i\colon A^{(i)} \rtimes_{\alpha^{(i)}} G\to A^{(i+1)} \rtimes_{\alpha^{(i+1)}} G\) such that 
		\[
		\psi_i(a\delta_g) = \phi_i(a)\delta_g
		\]
		for \(a\in A^{(i)}, g\in G\) (see~\cite[Proposition 22.2]{Exel2017Book-Partial-action-Fell-bundle}). Moreover, \(\big( A^{(i)} \rtimes_{\alpha^{(i)}} G, \psi_i\big)_{i\in \N}\) is an inductive sequence of \(\Cst\)\nb-algebras as \((A^{(i)}, \phi_i)_{i\in \N}\) is an inductive sequence of \(\Cst\)\nb-algebras. We now show that \(A\rtimes_{\alpha}G\) satisfies the universal property for the inductive limit \(\varinjlim A^{(i)} \rtimes_{\alpha^{(i)}} G\). Let \((A, (\mu_i)_{i\in \N})\) be the inductive limit of the inductive sequence \((A^{(i)}, \phi_i)_{i \in \N}\). Define a \(^*\)\nb-homomorphism \(\lambda_i\colon A^{(i)} \rtimes_{\alpha^{(i)}}G \to A\rtimes_{\alpha}G\) by 
		\[
		\lambda_i(a\delta_g) = \mu_i(a) \delta_g
		\]
		where \(a\in A^{(i)}_g, g\in G\) and \(i\in \N\). For \(a\in A^{(i)}_g\), we have 
		\[
		\lambda_{i+1} \circ \psi_i (a\delta_g) = \lambda_{i+1}(\phi_i(a) \delta_g) = \mu_{i+1}(\phi_i(a)) \delta_g
		= \mu_i(a) \delta_g = \lambda_i(a\delta_g).
		\]
		The third equality above follows from the fact that \((A, (\mu_i)_{i\in \N})\) is the inductive limit of \((A^{(i)}, \phi_i)_{i \in \N}\). Therefore, \(\lambda_{i+1}\circ \psi_i = \lambda_i\) for all \(i \in \N\).
		
		We now prove the uniqueness part. Let \(B\) be a \(\Cst\)\nb-algebra and \(\zeta_i\colon A^{(i)} \rtimes_{\alpha^{(i)}} G \to B\) a \(^*\)\nb-homomorphism such that \(\zeta_{i+1}\circ\psi_i = \zeta_i\) for all \(i\in \N\). We need to show that there is a unique \(^*\)\nb-homomorphism \(\zeta\colon A\rtimes_\alpha G\to B\) such that \(\zeta \circ \lambda_i = \zeta_i\) for \(i\in \N\). 
		
		Without loss of generality, assume that \(B\) is a nondegenerate \(\Cst\)\nb-subalgebra of~\(\Bound(\Hilm)\) for a separable Hilbert space \(\Hilm\). For \(i\in \N\), set \(\Hilm_i = \overline{\zeta_i(A^{(i)}\rtimes_{\alpha^{(i)}} G) \Hilm}\), which is a closed subspace of \(\Hilm\). By Theorem~13.2 of~\cite{Exel2017Book-Partial-action-Fell-bundle}, there exists a unique covariant representation \((\pi_i, u^{(i)})\) of \(\alpha^{(i)}\) on \(\Hilm_i\) such that \(\pi_i \rtimes u^{(i)} = \zeta_i(\cdot)|_{\Hilm_i}\). Using Lemma~\ref{lem:extend-rep-H}, we extend \(\pi_i\) to a representation on \(\Hilm\) by declaring it to be the zero representation on \(\Hilm^{\perp}_i\). By construction of \(\Hilm_i\), we have \(\Hilm_i \subseteq \Hilm_j\) whenever \(i\leq j\) and \(\Hilm_i\) is an invariant subspace for \(u^{(j)}_g\) for all \(g\in G\). Hence, \(u^{(i)}\) is a sub-partial representation of \(u^{(j)}\) for \(i\leq j\) (see Definition~\ref{def-sub-rep}). Moreover, \(\pi_{i+1} \circ \phi_i|_{\Hilm_i} = \pi_i\), and both the representations \(\pi_{i+1} \circ \phi_i\) and \(\pi_i\) vanish on \(\Hilm_{i+1}\cap \Hilm^{\perp}_i\) and \(\Hilm^{\perp}_{i+1}\). Lemma~\ref{lem:extend-rep-H} gives us \(\pi_{i+1} \circ \phi_i = \pi_i\) for all \(i\in \N\). Finally, since \(B\) is a nondegenerate subalgebra of \(\Bound(\Hilm)\), we have \(\Hilm = \overline{\bigcup_{i=1}^{\infty}\Hilm_i}\). 
		
		Thus from the universal property of the inductive limit \(\varinjlim A^{(i)}\), we obtain a representation \(\pi \colon A \to \Bound(\Hilm)\) satisfying \(\pi\circ \phi_i = \pi_i\) for \(i\in \N\).
		
		Lemma~\ref{lem:ind-gp-par-Hilb} ensures that there is a unique partial group representation \(u\colon G \to \Bound(\Hilm)\) such that \(u|_{\Hilm_i} = u^{(i)}\) for \(i\in \N\). Furthermore, by construction and the uniqueness, the pair \((\pi, u)\) is a covariant representation of \((A, G,\alpha)\). Therefore,~\cite[Proposition 13.1]{Exel2017Book-Partial-action-Fell-bundle} gives us a unique \(^*\)\nb-homomorphism \(\zeta\colon A\rtimes_{\alpha}G \to \Bound(\Hilm)\) such that 
		\[
		\zeta \circ \lambda_i = \zeta_i
		\]  
		for \(i\in \N\). The representation \(\zeta\) is the integrated form \(\pi \rtimes u\) of the covariant representation \((\pi,u)\). Since \(A = \overline{\bigcup_{i=1}^{\infty}\mu_i(A^{(i)})}\), we have \(\zeta(A\rtimes_{\alpha}G) \subseteq B\). The uniqueness of the covariant representation \((\pi, u)\) ensures that the integrated form~\(\zeta\) is unique.  
	\end{proof}

	\section{Applications}\label{sec:application}
	
	\subsection{Globalization of the inductive action}
	
	In this section, we discuss globalization, that is, whether a partial action is a restriction of a global action. We prove that if all the partial actions in an inductive sequence have globalization, then the inductive limit partial action also has globalization.
	\begin{definition}[{\cite[Definition 28.1]{Exel2017Book-Partial-action-Fell-bundle}}]\label{def:global}
		Let \(\eta\) be a (global) action of \(G\) on a \(\Cst\)\nb-algebra \(B\) and \(A\) a closed two-sided ideal of \(B\). Suppose \(\beta\) is the partial action obtained by restricting \(\eta\) to the ideal \(A\). We say that \(\eta\) is a globalization of~\(\beta\) if
		\[
		B= \overline{\sum_{g\in G} \eta_g(A)}.
		\] 
	\end{definition}
	Globalization of a given partial action on a \(\Cst\)\nb-algebra may not always exist. However, once globalization exists, it is unique (see~\cite[Proposition 28.2]{Exel2017Book-Partial-action-Fell-bundle}).   
	
	\begin{proposition}\label{prop-globa-equiv-homo}
		Let \(\alpha^{(i)} = \big( \{A^{(i)}_g\}_{g\in G}, \{\alpha^{(i)}_g\}_{g\in G}\} \big)\) be a partial action of \(G\) on \(A^{(i)}\) for \(i=1,2\). Suppose that \(\eta^{(i)}\) is the globalization of \(\alpha^{(i)}\) on \(B^{(i)}\) for \(i=1,2\). If \(\phi\colon A^{(1)} \to A^{(2)}\) is a \(G\)\nb-equivariant \(^*\)\nb-homomorphism, then there exists an equivariant \(^*\)\nb-homomorphism \(\psi\colon B^{(1)} \to B^{(2)}\) extending \(\phi\), that is, \(\psi|_{A^{(1)}} = \phi\).
	\end{proposition}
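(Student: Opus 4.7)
The $G$\nb-equivariance requirement, together with the density of $\sum_{g\in G}\eta^{(1)}_g(A^{(1)})$ in $B^{(1)}$, forces the definition on the dense $^*$\nb-subalgebra $B^{(1)}_0 \defeq \mathrm{span}\{\eta^{(1)}_g(a) : g\in G,\ a\in A^{(1)}\}$:
\[
\psi_0\Big(\sum_i \eta^{(1)}_{g_i}(a_i)\Big) \defeq \sum_i \eta^{(2)}_{g_i}(\phi(a_i)).
\]
The whole proof therefore reduces to showing that $\psi_0$ is a well-defined contractive $^*$\nb-homomorphism $B^{(1)}_0 \to B^{(2)}$; then continuity extends it to $\psi\colon B^{(1)}\to B^{(2)}$, the $G$-equivariance is transparent from the formula, and $\psi|_{A^{(1)}}=\phi$ is the $g=e$ summand.

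\textbf{Reduction to representations.} To obtain well-definedness and boundedness simultaneously, I would pass to a Hilbert space. Fix a faithful covariant representation $(\sigma, V)$ of the global dynamical system $(B^{(2)}, G, \eta^{(2)})$ on a Hilbert space $\Hilm$ in which $V\colon G\to \Bound(\Hilm)$ is a full unitary representation of $G$ (obtainable by inducing a faithful representation of $B^{(2)}$ along the regular representation of $G$, which automatically yields a unitary $V$ because $\eta^{(2)}$ is global). Then $\pi \defeq \sigma\circ\phi\colon A^{(1)} \to \Bound(\Hilm)$, together with $V$, is a covariant representation of the partial system $(A^{(1)}, G, \alpha^{(1)})$ whose partial representation happens to be a full unitary representation. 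If $\pi$ admits a $G$-equivariant extension $\hat\pi\colon B^{(1)}\to \Bound(\Hilm)$ (i.e.\ $\hat\pi(\eta^{(1)}_g(b)) = V_g\hat\pi(b)V_g^*$), then each generator
\[
\hat\pi(\eta^{(1)}_g(a)) = V_g\sigma(\phi(a))V_g^* = \sigma(\eta^{(2)}_g(\phi(a)))
\]
lies in the closed $\Cst$\nb-subalgebra $\sigma(B^{(2)}) \subseteq \Bound(\Hilm)$, so $\psi \defeq \sigma^{-1}\circ\hat\pi$ is the desired equivariant extension of $\phi$.

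\textbf{Main obstacle.} The crux is an extension lemma: any covariant representation $(\pi, V)$ of a partial action $(A, G, \alpha)$ in which $V$ is a full unitary representation of $G$ extends uniquely to a covariant representation $(\hat\pi, V)$ of its globalization $(B, G, \eta)$ via $\hat\pi(\eta_g(a)) \defeq V_g\pi(a)V_g^*$. The delicate point is well-definedness, i.e.\ that $\sum_i \eta^{(1)}_{g_i}(a_i) = 0$ in $B^{(1)}$ implies $\sum_i V_{g_i}\pi(a_i)V_{g_i}^* = 0$ in $\Bound(\Hilm)$. My approach is to cut down on both sides by $\pi(e_\lambda)$ for an approximate unit $(e_\lambda)$ of $A^{(1)}$: the ideal property of $A^{(1)}$ in $B^{(1)}$ places each truncated summand $e_\lambda\cdot\eta^{(1)}_{g_i}(a_i)\cdot e_\mu$ inside $A^{(1)}_{g_i}$, the covariance identity $V_g\pi(c)V_g^* = \pi(\alpha^{(1)}_g(c))$ on $A^{(1)}_{g^{-1}}$ turns the truncated operator sum into $\pi$ of a genuine element of $A^{(1)}$, and applying $\pi$ to the vanishing identity $\sum_i e_\lambda\cdot\eta^{(1)}_{g_i}(a_i)\cdot e_\mu = 0$ produces $0$. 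Taking the limit $\lambda,\mu\to\infty$ gives the operator identity on the invariant subspace $\overline{\pi(A^{(1)})\Hilm}$; Lemma~\ref{lem:extend-rep-H} then handles the orthogonal complement, and multiplicativity, $^*$-compatibility, and contractivity of $\hat\pi$ follow from the same circle of ideas.
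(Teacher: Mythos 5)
Your overall strategy mirrors the paper's: both define $\psi$ on the dense $^*$\nb-subalgebra spanned by $\{\eta^{(1)}_g(a) : g\in G,\ a\in A^{(1)}\}$ by the only formula that equivariance permits, and both must then justify that this prescription is well defined, bounded and multiplicative. The gap in your argument sits exactly at that justification, in the ``Main obstacle'' paragraph. The covariance identity $V_g\pi(c)V_g^{*}=\pi(\alpha^{(1)}_g(c))$ is available only for $c$ in the ideal $A^{(1)}_{g^{-1}}$, so you cannot convert the truncated operator $\pi(e_\lambda)\,V_g\pi(a)V_g^{*}\,\pi(e_\mu)$ into $\pi\bigl(e_\lambda\,\eta^{(1)}_g(a)\,e_\mu\bigr)$: doing so requires commuting $\pi(e_\lambda)$ past $V_g$, i.e.\ $V_{g}^{*}\pi(e_\lambda)V_g=\pi(\alpha^{(1)}_{g^{-1}}(e_\lambda))$, which needs $e_\lambda\in A^{(1)}_{g}$ and not merely $e_\lambda\in A^{(1)}$. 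The same obstruction defeats the multiplicativity of $\hat\pi$, which you defer to ``the same circle of ideas.''

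Moreover, the gap cannot be closed, because with the notion of $G$\nb-equivariance actually used in the paper ($\phi(A^{(1)}_g)\subseteq A^{(2)}_g$ and $\phi\circ\alpha^{(1)}_g=\alpha^{(2)}_g\circ\phi$ on $A^{(1)}_{g^{-1}}$) the statement is false. Take $G=\mathbb{Z}/2=\{e,g\}$; let $\alpha^{(1)}$ be the partial action on $A^{(1)}=\C$ with $A^{(1)}_g=\{0\}$, whose globalization is $B^{(1)}=\C\oplus\C$ with the flip action and $A^{(1)}=\C\oplus 0$; let $\alpha^{(2)}$ be the trivial global action on $A^{(2)}=B^{(2)}=\C$; and let $\phi=\mathrm{id}_{\C}$, which is equivariant because $A^{(1)}_g=\{0\}$ makes both conditions vacuous. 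Any equivariant extension $\psi$ must satisfy $\psi(1,0)=1$ and $\psi(0,1)=\psi(\eta^{(1)}_g(1,0))=\eta^{(2)}_g(1)=1$, hence $\psi(1,1)=2$, which no $^*$\nb-homomorphism into $\C$ can do. In your framework this is exactly the failure of the proposed extension lemma: $\pi(z)=zI$ together with $V_g$ the flip on $\C^2$ is a covariant pair for $\alpha^{(1)}$ (covariance is vacuous since $A^{(1)}_g=0$), yet $\hat\pi(\eta^{(1)}_g(1))=V_g\pi(1)V_g^{*}=I=\hat\pi(1,0)$ forces $\hat\pi$ to be non-multiplicative on the orthogonal projections $(1,0)$ and $(0,1)$. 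For what it is worth, the paper's own proof founders on the same point --- its norm estimate and its multiplicativity computation both apply $\phi$ to elements such as $\eta^{(1)}_{g}(a)$, which lie in $B^{(1)}$ but not in the domain of $\phi$ --- so some strengthening of the hypothesis on $\phi$ (for instance $\phi^{-1}(A^{(2)}_g)=A^{(1)}_g$, or a fullness condition on $\phi(A^{(1)}_g)$ inside $A^{(2)}_g$) is needed before either your argument or the paper's can be made to work.
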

	\begin{proof}
		Since \(\eta^{(i)}\) is the globalization of \(\alpha^{(i)}\), we have 
		\[
		B^{(i)} = \overline{\sum_{g\in G} \eta^{(i)}_g(A^{(i)})}
		\]
		for \(i=1,2\). Define a linear map \(\psi\colon B^{(1)} \to B^{(2)}\) by 
		\[
		\psi\biggr( \sum_{i=1}^{n} \eta^{(1)}_{g_i} (a_i)\biggr) = \sum_{i=1}^{n}\eta^{(2)}_{g_i}(\phi(a_i))
		\]
		for \(g_i\in G, a_i\in A^{(1)}\) and \(i=1,2,\cdots, n\).
		The map \(\psi\) is bounded as 
		\[
		 \norm{\psi\bigr( \sum_{i=1}^{n} \eta^{(1)}_{g_i} (a_i)\bigr)} = \norm{ \sum_{i=1}^{n}\eta^{(2)}_{g_i}(\phi(a_i))} =\norm{ \sum_{i=1}^{n}\phi (\eta^{(1)}_{g_i}(a_i))} \leq \norm{\sum_{i=1}^{n} \eta^{(1)}_{g_i} (a_i)}.
		\]
		For \(a\in A^{(1)}\), we have 
		\[
		\psi(a) = \psi(\eta^{(1)}_e(a)) = \eta^{(2)}_e(\phi(a)) = \phi(a)
		\]
		where \(e\) is the identity element of \(G\). Therefore, \(\psi|_{A^{(1)}} = \phi\). Moreover, we have 
		\begin{align*}
			\psi\bigr(\eta^{(1)}_g\bigr(\sum_{i=1}^{n}\eta^{(1)}_{g_i}(a_i)\bigr)\bigr)
			&= \psi \bigr(\sum_{i=1}^{n} \eta^{(1)}_{gg_i}(a_i)\bigr) =\sum_{i=1}^{n} \eta^{(2)}_{gg_i}(\phi(a_i)) \\
			&= \eta^{(2)}_g\bigr( \sum_{i=1}^{n}\eta^{(2)}_{g_i}(\phi(a_i)) \bigr) =\eta^{(2)}_g\bigr(\psi\bigr(\sum_{i=1}^{n}\eta^{(1)}_{g_i}(a_i)\bigr)\bigr)
		\end{align*}
		for \(\sum_{i=1}^{n}\eta^{(1)}_{g_i}(a_i) \in B^{(1)}\) and \(g\in G\). Therefore, \(\psi\) is \(G\)\nb-equivariant. 
		To prove \(\psi\) is multiplicative, it is enough to verify the following equality 
		\[
		\psi\big(\eta^{(1)}_g(a)\eta^{(1)}_h(b) \big) = \eta^{(2)}_g(\phi(a))\eta^{(2)}_h(\phi(b))
		\]
		for \(a,b\in A^{(1)}\) and \(g,h\in G\).
		The above equation follows from the following computation
		\begin{align*}
				\psi\big(\eta^{(1)}_g(a)\eta^{(1)}_h(b) \big) &= \psi \big( \eta^{(1)}_h(\eta^{(1)}_{h^{-1}g}(a)b)\big)= \eta^{(2)}_h\big(\phi(\eta^{(1)}_{h^{-1}g}(a)b)\big)\\ &
				= \eta^{(2)}_h\big(\phi(\eta^{(1)}_{h^{-1}g}(a)) \phi(b)\big)
			=  \eta^{(2)}_h\big(\eta^{(2)}_{h^{-1}g}(\phi(a)) \phi(b)\big)\\
			&= \eta^{(2)}_g(\phi(a))\eta^{(2)}_h(\phi(b)).
		\end{align*}
		The second equality of the second line above follows from \(G\)\nb-equivariance of \(\phi\).
		Finally,
		\begin{align*}
				\psi\bigr(\big(\eta^{(1)}_g(a)\big)^*\bigr) &= \psi\big(\eta^{(1)}_{g^{-1}} (a^*)\big) = \eta^{(2)}_{g^{-1}}\big(\phi(a^*)\big) = \eta^{(2)}_{g^{-1}}\big(\phi(a)^*\big) \\
			&= \eta^{(2)}_g(\phi(a))^* = \psi(\eta^{(1)}_g(a))^*
		\end{align*}
		for \(a\in A^{(i)}, g\in G\).
		Therefore, \(\psi\) is a \(^*\)\nb-homomorphism from \(B^{(1)} \) to \(B^{(2)}\).
	\end{proof}
	
	\begin{theorem}\label{thm:globalize}
		Let \(\big((A^{(i)}, G, \alpha^{(i)}), \phi_i  \big)_{i\in \N}\) be an inductive sequence of partial dynamical systems with the inductive limit partial dynamical system \((A, G, \alpha)\). If each partial action \(\alpha^{(i)}\) in the sequence admits a globalization, then the inductive limit partial action \(\alpha\) also admits a globalization.
	\end{theorem}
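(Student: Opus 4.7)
The plan is to build the globalization of $\alpha$ as the inductive limit of the individual globalizations $\eta^{(i)}$. Let $B^{(i)}$ denote the $\Cst$\nb-algebra on which the globalization $\eta^{(i)}$ of $\alpha^{(i)}$ lives, so that $A^{(i)}$ sits inside $B^{(i)}$ as a closed two-sided ideal and $B^{(i)} = \overline{\sum_{g\in G}\eta^{(i)}_g(A^{(i)})}$. Using Proposition~\ref{prop-globa-equiv-homo}, I extend each $G$\nb-equivariant $^*$\nb-homomorphism $\phi_i\colon A^{(i)}\to A^{(i+1)}$ to a $G$\nb-equivariant $^*$\nb-homomorphism $\psi_i\colon B^{(i)}\to B^{(i+1)}$, and let $(B,(\nu_i)_{i\in\N})$ be the inductive limit of $(B^{(i)},\psi_i)_{i\in\N}$. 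A simpler analogue of Proposition~\ref{prop-ind-dyn-sys}, in which every ideal equals the ambient algebra, produces a global $G$\nb-action $\eta$ on $B$ characterised by $\eta_g\circ\nu_i=\nu_i\circ\eta^{(i)}_g$. Because each inclusion $A^{(i)}\hookrightarrow B^{(i)}$ is isometric and intertwines $\phi_i$ with $\psi_i$, the induced $^*$\nb-homomorphism $\iota\colon A\to B$ is also isometric, so I identify $A$ with $\overline{\bigcup_i\nu_i(A^{(i)})}$; approximating products $\nu_i(b)\nu_j(a)$ for $b\in B^{(i)}$ and $a\in A^{(j)}$ at a common stage $k\ge\max(i,j)$ and invoking the ideal property of $A^{(k)}$ inside $B^{(k)}$ shows $A$ is a closed two-sided ideal of $B$.

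Two conditions of Definition~\ref{def:global} then remain. For $B=\overline{\sum_{g\in G}\eta_g(A)}$, applying the continuous map $\nu_i$ to $B^{(i)}=\overline{\sum_g\eta^{(i)}_g(A^{(i)})}$ and using $\eta_g\circ\nu_i=\nu_i\circ\eta^{(i)}_g$ places each $\nu_i(B^{(i)})$ inside $\overline{\sum_g\eta_g(A)}$; taking the closure of the union yields the desired equality, the reverse inclusion being immediate. For the statement that the restriction of $\eta$ to $A$ recovers $\alpha$, the pointwise identity $\eta_g|_{A_{g^{-1}}}=\alpha_g$ reduces by continuity to the dense subspace $\bigcup_i\nu_i(A^{(i)}_{g^{-1}})$, where both sides coincide with $\nu_i\circ\alpha^{(i)}_g$, since $\eta^{(i)}_g$ restricts to $\alpha^{(i)}_g$ on $A^{(i)}_{g^{-1}}$ by definition of globalization.

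The technical crux is the matching of domain ideals $A\cap\eta_g(A)=A_g=\overline{\bigcup_i\nu_i(A^{(i)}_g)}$. I plan to exploit the $\Cst$\nb-algebraic identity $I\cap J=\overline{IJ}$ for closed two-sided ideals, applied to $A$ and $\eta_g(A)$ inside $B$. Expanding $A\cdot\eta_g(A)$ through the dense subalgebras $\bigcup_i\nu_i(A^{(i)})$ and $\bigcup_i\nu_i(\eta^{(i)}_g(A^{(i)}))=\bigcup_i\eta_g(\nu_i(A^{(i)}))$, and using the compatibility of the $\nu_i$ to push cross-index products to a common stage, collapses the expression to $\overline{\bigcup_i\nu_i(A^{(i)}\cdot\eta^{(i)}_g(A^{(i)}))}$. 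A second application of $I\cap J=\overline{IJ}$ inside $B^{(i)}$ gives $\overline{A^{(i)}\cdot\eta^{(i)}_g(A^{(i)})}=A^{(i)}\cap\eta^{(i)}_g(A^{(i)})=A^{(i)}_g$, which matches the inductive-limit description of $A_g$. This ideal-intersection manipulation is the main delicate point; once it is settled, $\eta$ is the required globalization of $\alpha$, and uniqueness of globalization then makes $(B,\eta)$ canonical.
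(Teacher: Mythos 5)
Your proof follows the same route as the paper's: extend the connecting maps to the globalizations via Proposition~\ref{prop-globa-equiv-homo}, take the inductive limit \(B=\varinjlim B^{(i)}\) with its limit global action \(\eta\), and verify that \(\eta\) restricts to \(\alpha\) on the dense union and that \(B=\overline{\sum_{g\in G}\eta_g(A)}\). Your extra verifications---that \(A\hookrightarrow B\) is an isometric embedding onto a closed two-sided ideal, and that the domain ideals match, \(A\cap\eta_g(A)=A_g\), via \(I\cap J=\overline{IJ}\) pushed to a common stage---are correct and in fact supply steps that the paper's proof leaves implicit.
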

	\begin{proof}
		Let \(B^{(i)}\) be a \(\Cst\)\nb-algebra and \(\eta^{(i)}\) be the globalization of the partial action \(\alpha^{(i)}\) on \(A^{(i)}\) for \(i\in \N\). Then from Definition~\ref{def:global}, we have 
		\[
		B^{(i)} = \overline{\sum_{g\in G}\eta^{(i)}_g(A^{(i)})}
		\]
		for \(i\in \N\). Since \(\phi_i\colon A^{(i)} \to A^{(i+1)}\) is a \(G\)\nb-equivariant \(^*\)\nb-homomorphism, Proposition~\ref{prop-globa-equiv-homo} gives a \(G\)\nb-equivariant \(^*\)\nb-homomorphism \(\psi_i\colon B^{(i)} \to B^{(i+1)}\) with \(\psi_i|_{A^{(i)}} = \phi_i\) for \(i\in \N\). Moreover, \((B^{(i)}, \psi_i)_{i\in \N}\) is an inductive sequence of \(\Cst\)\nb-algebras. Let \((B,(\mu_i)_{i\in \N})\) be the inductive limit of \((B^{(i)}, \psi_i)_{i\in \N}\). Proposition~8.2.24 of~\cite{Phillips2011-Crossed-prod-book} ensures that there exists a unique inductive (global) action \(\eta = \varinjlim \eta^{(i)}\) of \(G\) on \(B=\varinjlim B^{(i)}\). We claim that \(\eta\) is the globalization of \(\alpha = \varinjlim \alpha^{(i)}\).
		
		We first assume \(a \in \bigcup_{i=1}^{\infty}\mu_{i}(A^{(i)}_{g^{-1}})\). Then
		\(a = \mu_{i}(a^{(i)}) \) for some \(a^{(i)} \in A^{(i)}_{g^{-1}}\). Now
		\[
		\alpha_{g}(a) = \alpha_{g}(\mu_i(a^{(i)})) = \mu_i(\alpha^{(i)}_{g}(a^{(i)})) = \mu_i(\eta^{(i)}_{g}(a^{(i)})) = \eta_{g}(\mu_i(a^{(i)})) = \eta_g(a).
		\]
		If \(a\in A_{g^{-1}} = \overline{\bigcup_{i=1}^{\infty}\mu_{i}(A^{(i)}_{g^{-1}})}\), then a continuity argument ensures that \(\alpha_g(a) = \eta_{g}(a)\). 
		Therefore, the partial action \(\alpha\) is obtained by the restriction of the global action \(\eta\) on \(A\).
		To complete the proof, we need to show that
		\[
		B = \overline{\sum_{g\in G}\eta_{g}(A)}.
		\]
		Clearly, \(\overline{\sum_{g\in G}\eta_{g}(A)} \subseteq B\). For the reverse inclusion, we assume \(x\in \bigcup_{i=1}^{\infty}\mu_i(B^{(i)})\). Then \(x= \mu_i(b)\) for some \(b\in B^{(i)}\). As \(b\in B^{(i)}\), we can write \(b  = \sum_{i=1}^{k}\eta_{g_i}^{(i)}(a^{(i)})\) for \(a^{(i)} \in A^{(i)}\) and \(k\in \N\). Then, we have
		\[
		x= \mu_{i}(b) = \mu_i\biggr(\sum_{i=1}^{k}\eta_{g_i}^{(i)}(a^{(i)})\biggr) = \sum_{i=1}^{k} \mu_i\big( \eta_{g_i}^{(i)}(a^{(i)}) \big) = \sum_{i=1}^{k} \eta_{g_i}\big( \mu_i(a^{(i)})\big).
		\]
		Hence \( x\in\overline{\sum_{g\in G}\eta_{g}(A)}\).
		If \(x\in B = \overline{\bigcup_{i=1}^{\infty}\mu_i(B^{(i)})}\), then a continuity argument ensures that \(x\in \overline{\sum_{g\in G}\eta_{g}(A)}\).
		Thus, \(\eta\) is the globalization of \(
		\alpha\).
	\end{proof}

	\subsection{Rokhlin dimension of the inductive limit partial action}
	
	Rokhlin-type properties of a dynamical system are very crucial to describe the structure of the associated crossed product algebra (see~\cite{Gardella-Hirshberg-Santiago2021Rokhlin-dim-Crossed-prod}). The Rokhlin dimension for partial action was introduced in~\cite{Abadie-Gradella2022-Partial-Rokh-dim}. In Proposition~\ref{prop-rok-dim}, we shall study the finiteness of the Rokhlin dimension of the inductive limit partial action. 
	
	\begin{definition}[{\cite[Definition 2.1]{Abadie-Gradella2022-Partial-Rokh-dim}}] \label{def-Rok-dim}
		Let \(\alpha = (\{A_g\}_{g\in G}, \{\alpha_{g}\}_{g\in G})\) be a partial action of a finite group \(G\) on a unital \(\Cst\)\nb-algebra \(A\). Let \(k\in \N\). We say \(\alpha\) has Rokhlin dimension at most \(k\), denoted \(\textup{dim}_{\textup{Rok}}(\alpha) \leq k\), if for every \(\epsilon >0\) and every finite set \(F\subseteq A\) there exist positive contractions \(f_{j,g}\in A_g\) for \(g\in G\) and \(j=0,1,\cdots,k\) satisfying:
		\begin{enumerate}
			\item \label{Rok-con-1} \(\norm{\alpha_{g}(f_{j,h}x) - f_{j,gh}\alpha_{g}(x)} <\epsilon\) for \(g,h\in G, j = 0,1\cdots,k\) and \(x\in A_{g^{-1}}\cap F\);
			\item \label{Rok-con-2} \(\norm{f_{j,g}f_{j,h}} < \epsilon \) for \(j=0,1,\cdots,k\) and \(g,h\in G\) with \(g\neq h\);
			\item \label{Rok-con-3}\(\norm{\sum_{j=0}^{k} \sum_{g\in G} f_{j,g} -1} < \epsilon\);
			\item \label{Rok-con-4}\(\norm{f_{j,g}a-af_{j,g}} < \epsilon\) for \(j=0,1,\cdots,k\) and \(g \in G,a\in F\).
		\end{enumerate}  
		Furthermore, we say the partial action \(\alpha\) has Rokhlin dimension with commuting towers at most \(k\), denoted \(\textup{dim}^{\textup{c}}_{\textup{Rok}}(\alpha) \leq k\), if for every \(\epsilon > 0\) and every finite set \(F\subseteq A\), there exist positive contractions \(f_{j,g}\in A_g\) for \(g\in G\) and \(j=0,1,\cdots,k\) satisfying Conditions~\eqref{Rok-con-1}-\eqref{Rok-con-4} above and 
		\begin{enumerate}
			\setcounter{enumi}{4}
			\item \label{Rok-con-5} \(\norm{f_{j,g}f_{l,h} - f_{l,h}f_{j,g}} < \epsilon\) for \(j,l = 0,1,\cdots,k\) and \(g,h
			\in G\).
		\end{enumerate} 
	\end{definition}
\noindent The Rokhlin dimension of the partial action \(\alpha\) is defined by 
	\[
	\textup{dim}_{\textup{Rok}}(\alpha) =\min\{k\in \N: \textup{dim}_{\textup{Rok}}(\alpha)  \leq k\}.
	\]
	Similarly, the Rokhlin dimension with commuting towers \(\textup{dim}^{\textup{c}}_{\textup{Rok}(\alpha)}\) is also defined.
	\begin{remark}
		Since we are only considering partial actions of groups on \emph{unital} \(\Cst\)\nb-algebras, we can remove the multiplicative witness \(a\in F\) appeared in Conditions~(1)-(5) of~\cite[Definition 2.1]{Abadie-Gradella2022-Partial-Rokh-dim}. (See the remark after Definition~2.1 of~\cite{Abadie-Gradella2022-Partial-Rokh-dim}.) 
	\end{remark}
	
	\begin{proposition}\label{prop-rok-dim}
		Let \(G\) be a finite group. Let \(\big((A^{(i)}, G, \alpha^{(i)}), \phi_i  \big)_{i\in \N}\) be an inductive sequence of partial dynamical systems with unital \(\Cst\)\nb-algebras \(A^{(i)}\) and unital connecting maps \(\phi_i\) for \(i\in \N\). Suppose that \(A = \varinjlim A^{(i)} \) and \(\alpha = \varinjlim \alpha^{(i)}\).
		Then
		\[
		\textup{dim}_{\textup{Rok}}(\alpha) \leq \liminf_{i\to\infty}  \textup{dim}_{\textup{Rok}}(\alpha^{(i)})
		\]
		and
		\[
		\textup{dim}^{\textup{c}}_{\textup{Rok}}(\alpha) \leq \liminf_{i\to\infty}  \textup{dim}^{\textup{c}}_{\textup{Rok}}(\alpha^{(i)}).
		\] 
	\end{proposition}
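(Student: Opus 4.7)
Let $k = \liminf_{i\to\infty}\dim_{\textup{Rok}}(\alpha^{(i)})$; we may assume $k<\infty$. The plan is to fix $\epsilon>0$ and a finite set $F\subseteq A$, pull $F$ back to an approximate copy inside some $A^{(i_0)}$ with $\dim_{\textup{Rok}}(\alpha^{(i_0)})\leq k$, invoke Definition~\ref{def-Rok-dim} at level $i_0$ to produce Rokhlin contractions $\tilde f_{j,g}\in A^{(i_0)}_g$, and then push them up to $A$ by setting $f_{j,g}\defeq\mu_{i_0}(\tilde f_{j,g})\in A_g$. The three features of $\mu_{i_0}$ that make this work are: it is a unital $^*$\nb-homomorphism (hence contractive and maps positive contractions to positive contractions), it maps $A^{(i_0)}_g$ into $A_g$, and it intertwines $\alpha^{(i_0)}_g$ with $\alpha_g$ by Equation~\eqref{equ-G-equiv-mu}.

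For the preparation step, I would choose an auxiliary tolerance $\eta=\eta(\epsilon,|F|,|G|,k)$ and pick $i_0$ large enough so that simultaneously: (a) $\dim_{\textup{Rok}}(\alpha^{(i_0)})\leq k$, which is possible by the $\liminf$ hypothesis; (b) each $a\in F$ has an $\eta$-approximant $\tilde a\in A^{(i_0)}$ with $\norm{\mu_{i_0}(\tilde a)-a}<\eta$; and (c) for every pair $(g,x)$ with $g\in G$ and $x\in F\cap A_{g^{-1}}$, there is $\tilde x_g\in A^{(i_0)}_{g^{-1}}$ with $\norm{\mu_{i_0}(\tilde x_g)-x}<\eta$. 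Condition (c) is the subtle refinement of (b): the approximants must live in the correct ideal, which is supplied by the identification $A_{g^{-1}}=\overline{\bigcup_i\mu_i(A^{(i)}_{g^{-1}})}$ established in Proposition~\ref{prop-ind-dyn-sys}. The finiteness of $G$ ensures that only finitely many ideals are involved, so a single $i_0$ suffices.

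Apply Definition~\ref{def-Rok-dim} at level $i_0$ to the enlarged finite set $\widetilde F=\{\tilde a:a\in F\}\cup\{\tilde x_g:x\in F\cap A_{g^{-1}},\,g\in G\}$ with tolerance $\eta$, obtaining positive contractions $\tilde f_{j,g}\in A^{(i_0)}_g$ verifying the relevant conditions (1)--(4), together with (5) in the commuting-tower case. Setting $f_{j,g}=\mu_{i_0}(\tilde f_{j,g})$, conditions (2), (3), and (5) transfer immediately from contractivity and unitality of $\mu_{i_0}$: for instance $\norm{\sum_{j,g}f_{j,g}-1_A}=\norm{\mu_{i_0}\bigl(\sum_{j,g}\tilde f_{j,g}-1_{A^{(i_0)}}\bigr)}<\eta$. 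Condition (4) follows by writing $f_{j,g}a-af_{j,g}=\mu_{i_0}(\tilde f_{j,g}\tilde a-\tilde a\tilde f_{j,g})+O(\eta)$ using approximant (b).

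The main obstacle is condition (1), $\norm{\alpha_g(f_{j,h}x)-f_{j,gh}\alpha_g(x)}<\epsilon$ for $x\in A_{g^{-1}}\cap F$. I would handle it by replacing $x$ by $\mu_{i_0}(\tilde x_g)$ using approximant (c), then pushing $\alpha_g$ through $\mu_{i_0}$ via Equation~\eqref{equ-G-equiv-mu} to reduce the expression to $\mu_{i_0}\bigl(\alpha^{(i_0)}_g(\tilde f_{j,h}\tilde x_g)-\tilde f_{j,gh}\alpha^{(i_0)}_g(\tilde x_g)\bigr)$ plus telescoping error terms each of order $\eta$. The inner bracket has norm less than $\eta$ by the level\nb-$i_0$ Rokhlin estimate, and contractivity of $\mu_{i_0}$ then gives a total error bounded by a fixed multiple of $\eta$. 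Choosing $\eta$ small enough depending on that multiple delivers the bound $\epsilon$. The commuting-tower inequality is obtained by the identical argument with condition (5) included throughout.
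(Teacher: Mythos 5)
Your proposal is correct and follows essentially the same route as the paper: pass to a subsequence on which the Rokhlin dimension is bounded by $k$, approximate the finite set $F$ by elements of some $A^{(i_0)}$, obtain the Rokhlin contractions at level $i_0$, push them forward by $\mu_{i_0}$, and control the errors by a $3\eta$-type telescoping estimate. Your explicit insistence that the approximants of elements of $F\cap A_{g^{-1}}$ be chosen inside the ideal $A^{(i_0)}_{g^{-1}}$ (using $A_{g^{-1}}=\overline{\bigcup_i\mu_i(A^{(i)}_{g^{-1}})}$) is a point the paper's proof treats only implicitly, and is a welcome clarification rather than a change of method.
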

	\begin{proof}
		The result holds if \(\liminf_{i\to\infty}  \textup{dim}_{\textup{Rok}}(\alpha^{(i)})
		= \infty\). Therefore, we assume that \(\liminf_{i\to\infty}  \textup{dim}_{\textup{Rok}}(\alpha^{(i)})
		< \infty\). Then there exists \(k\in \N\) such that for a given \(m\in \N\) there is \(n\in \N\) with \(n\geq m\) and satisfying \(\textup{dim}_{\textup{Rok}}(\alpha^{(n)}) \leq k\). So we can pass to a subsequence and assume that \(\textup{dim}_{\textup{Rok}}(\alpha^{(n)}) \leq k\) for all \(n\in \N\). Let~\(\epsilon >0\) and let \(F=\{a_1,a_2,\cdots, a_p\}\) be a finite subset of \(A\). Since \(A= \overline{\bigcup_{i=1}^{\infty}\mu_i(A^{(i)})}\), there exists a positive integer \(n\in \N\) and elements \(b_1,b_2,\cdots, b_p\in A^{(n)}\) such that \(\norm{a_j - \mu_n(b_j)} < \frac{\epsilon}{3}\) for \(j=1,2,\cdots,p\). But \(\textup{dim}_{\textup{Rok}}(\alpha^{(n)}) \leq k\). Then for \(\frac{\epsilon}{3}>0\) and the finite subset \(\{b_1,b_2,\cdots,b_p\}\) of \(A^{(n)}\) there are positive contractions \(f^{(n)}_{j,g}\in A^{(n)}_g\) for \(g\in G\) and \(j=0,1,\cdots,k\) satisfying Conditions~\eqref{Rok-con-1}-\eqref{Rok-con-4} of Definition~\ref{def-Rok-dim}. Consider the canonical map \(\mu_n\colon A^{(n)}_g \to A_g\). Clearly, \(\mu_n(f^{(n)}_{j,g})\in A_g\) for \(j=0,1,\cdots,k\) and \(g\in G\) and each \(\mu_n(f^{(n)}_{j,g})\) is a positive contraction. 
		We claim that: for \(j=0,1,\cdots,k\) and \(g\in G\) the positive contractions \(\mu_n(f^{(n)}_{j,g})\in A_g\) satisfy Conditions~\eqref{Rok-con-1}-\eqref{Rok-con-4} of Definition~\ref{def-Rok-dim} for~\(\epsilon >0\) and the finite set \(F\).
		
		\noindent Condition~\eqref{Rok-con-1}: given \(a\in A_{g^{-1}}\cap F\) choose \(b\in A^{(n)}_{g^{-1}}\cap \{b_1,b_2,\cdots, b_p\}\) such that \(\norm{a-\mu_n(b)} <\frac{\epsilon}{3}\). Now, we have
		\begin{align*}
			&\norm{\alpha_g(\mu_n(f_{j,h})a) - \mu_n(f_{j,gh})\alpha_g(a)} \\
			&\leq \norm{\alpha_g(\mu_n(f_{j,h})\mu_n(b)) - \mu_n(f_{j,gh})\alpha_g(\mu_n(b))}\\
			&+ \norm{\alpha_{g}(\mu_n(f_{j,h})(a-\mu_n(b))) - \mu_n(f_{j,gh})\alpha_{g}(a-\mu_n(b))} \\
			&< \norm{\mu_n(\alpha^{(n)}_{g}(f_{j,h}b) - f_{j,gh}\alpha^{(n)}_{g}(b))} +\frac{\epsilon}{3} +\frac{\epsilon}{3} <\epsilon.
		\end{align*}
		\noindent Conditions~\eqref{Rok-con-2} and~\eqref{Rok-con-3} are straightforward verifications.
		
		\noindent Condition~\eqref{Rok-con-4}: given 
		\(a\in F\) choose \(b\in \{b_1,b_2,\cdots, b_p\}\) such that \(\norm{a-\mu_n(b)} <\frac{\epsilon}{3}\). Then we have
		\begin{align*}
			&\norm{\mu_n(f_{j,g})a-a\mu_n(f_{j,g})} \\ 
			&\leq \norm{\mu_n(f_{j,g}) \mu_n(b)-\mu_n(b)\mu_n(f_{j,g})} \\
			&+\norm{\mu_n(f_{j,g})(a-\mu_n(b))-(a-\mu_n(b))\mu_n(f_{j,g})}\\
			&< \norm{\mu_n(f_{j,g}b-bf_{j,g})} +\frac{\epsilon}{3} + \frac{\epsilon}{3} < \epsilon.
		\end{align*}
		The proof of the commuting tower case is similar and is omitted.  
	\end{proof}

	\subsection{Traces on the inductive limit of partial crossed products}
	
	Consider the partial dynamical system \((A,G,\alpha)\). A linear functional \(\tau\colon A \to \C\) is called \(G\)\nb-invariant if 
	\(\tau(a) = \tau(\alpha_{g^{-1}}(a))\) for \(g\in G\) and \(a\in A_g\).

	\begin{lemma}[{\cite[Lemma 3.3]{Scarparo2017-Supramenable-group-partial-act}}]\label{lem:G-inv-trace}
		Let \((A,G,\alpha)\) be a partial dynamical system and let~\(E\) be the conditional expectation from \(A\rtimes_{\alpha}G\) onto \(A\) as Equation~\eqref{equ-cond-exp}. Let~\(\tau\) be a tracial state on~\(A\). Then \(\tau\circ E\) is a tracial state on \(A\rtimes_{\alpha}G\) iff \(\tau\) is \(G\)\nb-invariant.  
	\end{lemma}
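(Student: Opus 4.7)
The plan is to establish both implications by reducing to the action of $\tau\circ E$ on monomials $a\delta_g$ in the algebraic crossed product, and then exploiting the particularly simple behaviour of the conditional expectation $E$ recorded in~\eqref{equ-cond-exp}.

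For the sufficiency of $G$\nb-invariance, I would use continuity, density of $A\rtimes_{alg}G$ in $A\rtimes_\alpha G$, and bilinearity to reduce the trace property of $\tau\circ E$ to checking that $(\tau\circ E)(a\delta_g\cdot b\delta_h) = (\tau\circ E)(b\delta_h\cdot a\delta_g)$ for $a\in A_g$, $b\in A_h$, $g,h\in G$. By~\eqref{equ-cond-exp}, both sides vanish unless $h=g^{-1}$, so the only nontrivial case is to verify
\[
\tau(a\alpha_g(b)) = \tau(b\alpha_{g^{-1}}(a)) \qquad (a\in A_g,\ b\in A_{g^{-1}}).
\]
I would prove this by first applying the trace property of $\tau$ on $A$ to get $\tau(a\alpha_g(b))=\tau(\alpha_g(b)\,a)$, noting that $\alpha_g(b)a\in A_g$ since $A_g$ is an ideal, then using $G$\nb-invariance to bring the expression inside $\alpha_{g^{-1}}$, and finally using that $\alpha_{g^{-1}}$ is a $^*$\nb-homomorphism with $\alpha_{g^{-1}}\circ\alpha_g=\mathrm{id}$ on $A_{g^{-1}}$ to collapse the result to $\tau(b\alpha_{g^{-1}}(a))$.

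For the necessity direction, given $g\in G$ and a positive $a\in A_{g^{-1}}$, the idea is to exhibit two monomials whose products give $\alpha_g(a)\delta_e$ and $a\delta_e$ in opposite orders. Setting $b=\sqrt{a}\in A_{g^{-1}}$ and $c=\alpha_g(b)\in A_g$, a direct computation from the multiplication rule yields
\[
c\delta_g\cdot b\delta_{g^{-1}} = \alpha_g(a)\delta_e, \qquad b\delta_{g^{-1}}\cdot c\delta_g = a\delta_e.
\]
Applying $\tau\circ E$ and invoking its assumed trace property then gives $\tau(\alpha_g(a))=\tau(a)$; linearity extends this identity from positive elements to all of $A_{g^{-1}}$, which is the $G$\nb-invariance of $\tau$.

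The main delicate point I anticipate is the bookkeeping of ideals and domains: at each step one has to verify that the relevant element actually lies in $A_g$ or $A_{g^{-1}}$, so that the corresponding $\alpha_{g^{\pm 1}}$ acts as a $^*$\nb-homomorphism and satisfies the expected functoriality. This is controlled by the ideal property of each $A_g$ together with the partial-action relation $\alpha_{g^{-1}}\circ\alpha_g\subseteq\mathrm{id}$; once this bookkeeping is in place, both implications reduce to a single short calculation.
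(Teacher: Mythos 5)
Your argument is correct, and both directions go through: the sufficiency computation $\tau(a\alpha_g(b))=\tau(\alpha_g(b)a)=\tau(\alpha_{g^{-1}}(\alpha_g(b)a))=\tau(b\alpha_{g^{-1}}(a))$ is valid because $\alpha_g(b)a\in A_g$ (ideal property) and $\alpha_{g^{-1}}\circ\alpha_g=\mathrm{id}$ on $A_{g^{-1}}$, and the necessity trick with $b=\sqrt a$, $c=\alpha_g(b)$ correctly produces $\alpha_g(a)\delta_e$ and $a\delta_e$ as the two ordered products, with $\sqrt a\in A_{g^{-1}}$ since $\sqrt a\in \Cst(a)$. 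Note that the paper offers no proof of this lemma at all --- it is imported by citation from \cite[Lemma 3.3]{Scarparo2017-Supramenable-group-partial-act} --- and your proof is essentially the standard argument given there; the only small point left implicit is that $\tau\circ E$ is automatically a state (positivity and norm one follow from $E$ being a conditional expectation restricting to the identity on $A\delta_e$), so that only the trace identity on monomials genuinely needs checking.
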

	
	\begin{definition}
		Let \(\big((A^{(i)}, G, \alpha^{(i)}), \phi_i  \big)_{i\in \N}\) be an inductive sequence of partial dynamical systems. Let \(\tau_i\) be a given tracial state on \(A^{(i)}\) for \(i\in \N\). Then \((\tau_i)_{i\in \N}\) is called inductive sequence of tracial states if \(\tau_{i+1}\circ \phi_i = \tau_i\) for \(i\in \N\).
	\end{definition}
	
	\begin{lemma}\label{lem:trace-ind-lim}
		Suppose \((\tau_i)_{i\in \N}\) is a 
		\(G\)\nb-invariant inductive sequence of tracial states on an inductive sequence of partial dynamical system \(\big((A^{(i)}, G, \alpha^{(i)}), \phi_i  \big)_{i\in \N}\). Then there exists a \(G\)\nb-invariant tracial state \(\tau = \varinjlim \tau_i\) on \(A =\varinjlim A^{(i)}\).
	\end{lemma}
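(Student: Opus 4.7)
The plan is to construct $\tau$ first on the dense $^*$\nb-subalgebra $\bigcup_{i=1}^{\infty}\mu_i(A^{(i)})$ of $A$, then extend by continuity. Concretely, I would set
\[
\tau(\mu_i(a)) \defeq \tau_i(a) \quad \text{for } a \in A^{(i)}.
\]
Well-definedness of this prescription across different indices follows from the hypothesis $\tau_{i+1}\circ \phi_i = \tau_i$: if $\mu_i(a) = \mu_j(b)$ with $i\leq j$, then after pushing $a$ forward to stage $j$ via $\phi_{j,i} \defeq \phi_{j-1}\circ\cdots\circ\phi_i$, the two values $\tau_i(a) = \tau_j(\phi_{j,i}(a))$ and $\tau_j(b)$ agree up to error that is controlled by the fact that $\mu_j(\phi_{j,i}(a)-b) = 0$.

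The crucial step is establishing that the assignment is contractive so that it extends to $A$. For this I would invoke the standard inductive-limit formula $\norm{\mu_i(a)} = \lim_{j\to\infty}\norm{\phi_{j,i}(a)}$ (cf.~\cite[Proposition 6.2.4]{Rordam2002K-theory-book}). Since each $\tau_j$ is a state and therefore has norm one, and since $\tau_j\circ\phi_{j,i} = \tau_i$, we obtain
\[
|\tau_i(a)| = |\tau_j(\phi_{j,i}(a))| \leq \norm{\phi_{j,i}(a)} \quad \text{for all } j\geq i,
\]
and taking the limit yields $|\tau(\mu_i(a))| \leq \norm{\mu_i(a)}$. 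In particular, $\tau$ vanishes on the kernel of the canonical map to the dense subalgebra, and it extends uniquely to a bounded linear functional $\tau\colon A \to \C$ of norm at most one.

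Finally, I would verify that the extension is a $G$\nb-invariant tracial state. Positivity, the trace identity, and the normalization $\norm{\tau}=1$ all hold on the dense subalgebra (for positivity: $\mu_i(a)^*\mu_i(a) = \mu_i(a^*a)$, so $\tau(\mu_i(a)^*\mu_i(a)) = \tau_i(a^*a)\geq 0$; the trace identity and normalization are similar), hence extend by continuity. For $G$\nb-invariance, I would use Equation~\eqref{equ-G-equiv-mu}: for $g\in G$ and $\mu_i(a)\in A_g$ with $a\in A^{(i)}_g$,
\[
\tau(\alpha_{g^{-1}}(\mu_i(a))) = \tau(\mu_i(\alpha^{(i)}_{g^{-1}}(a))) = \tau_i(\alpha^{(i)}_{g^{-1}}(a)) = \tau_i(a) = \tau(\mu_i(a)),
\]
where the third equality uses the assumed $G$\nb-invariance of $\tau_i$. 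Since $\bigcup_i \mu_i(A^{(i)}_g)$ is dense in $A_g$, this identity extends to all of $A_g$ by continuity of $\alpha_{g^{-1}}$ and $\tau$. The main (and really only) technical point to get right is the norm estimate $|\tau_i(a)|\leq \norm{\mu_i(a)}$; once that is in hand, the rest of the verification is routine approximation.
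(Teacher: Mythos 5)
Your proposal is correct and follows essentially the same route as the paper: define $\tau$ on the dense union $\bigcup_i \mu_i(A^{(i)})$ by $\tau(\mu_i(a))=\tau_i(a)$, extend by continuity, and verify $G$\nb-invariance on the dense part using the $G$\nb-invariance of each $\tau_i$ together with Equation~\eqref{equ-G-equiv-mu}. Your explicit norm estimate $|\tau_i(a)|=|\tau_j(\phi_{j,i}(a))|\leq \norm{\phi_{j,i}(a)}\to\norm{\mu_i(a)}$ is exactly the point the paper's proof leaves implicit when it asserts that $\tau$ is continuous and well defined, so this is a welcome clarification rather than a deviation.
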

	\begin{proof}
		Define a linear functional \(\tau\colon \bigcup_{i=1}^{\infty}\mu_i(A^{(i)}) \to \C\) by \(\tau(\mu_i(a_i)) = \tau_i(a_i)\) for \(a_i\in A^{(i)}\). The functional \(\tau\) is well-defined because \(\tau_{i+1}\circ \phi_i = \tau_i\) for \(i\in\N\). Since each \(\tau_i\) is a tracial state, \(\tau\) is a continuous linear functional, and we can extend \(\tau\) to be a tracial state on \(A\). Given \(a_i\in A^{(i)}_g\), we have 
		\[
		\tau(\mu_i(a_i)) = \tau_i(a_i) = \tau_i(\alpha^{(i)}_{g^{-1}} (a_i)) = \tau(\mu_i(\alpha^{(i)}_{g^{-1}} (a_i))) = \tau(\alpha_{g^{-1}} (\mu_i(a_i))).
		\]
		The second equality of the above line follows from the \(G\)\nb-invariance of \(\tau_i\). After passing to the limit, we can conclude that \(\tau\) is a \(G\)\nb-invariant tracial state on \(A\).
	\end{proof}
	
	Let \(\tau = \varinjlim \tau_i\) be the \(G\)\nb-invariant tracial state on \(A = \varinjlim A^{(i)}\) as in the last lemma. Then by Lemma~\ref{lem:G-inv-trace}, \((\varinjlim \tau_i) \circ E\) is a tracial state on \(A\rtimes_{\alpha}G\) where \(E\) is the conditional expectation given by Equation~\eqref{equ-cond-exp}. 
	
	On the other side \(\tau_i\circ E_i\) is a tracial state on \(A^{(i)} \rtimes_{\alpha^{(i)}} G\), where \(E_i\colon A^{(i)} \rtimes_{\alpha^{(i)}} G \to A^{(i)}\) is the conditional expectation for \(i\in\N\). Moreover, \((\tau_i\circ E_i)_{i\in \N}\) is an inductive sequence of tracial states, as \((\tau_i)_{i\in \N}\) was an inductive sequence of tracial states. Therefore, Lemma~\ref{lem:trace-ind-lim} gives us a tracial state \(\varinjlim (\tau_i\circ E_i)\) on \(A\rtimes_{\alpha}G\).
	
	Let \(a\delta_g \in A\rtimes_{\alpha}G\) with \(a\in \bigcup_{i=1}^{\infty}\mu_{i}(A^{(i)})\). Then \(a= \mu_i(a_i)\) for \(a_i\in A^{(i)}\). Now 
	\begin{align*}
		(\varinjlim \tau_i) \circ E (a\delta_g) &=  \begin{cases}
			\varinjlim \tau_i (a) & \textup{ if } g=e,\\
			0 & \textup{ if } g\neq e;
		\end{cases}\\
		& = \begin{cases}
			\tau_i (a_i) & \textup{ if } g=e,\\
			0 & \textup{ if } g\neq e;
		\end{cases}\\
		&=\tau_i\circ E_i(a_i\delta_g)\\
		& =(\varinjlim (\tau_i \circ E_i)) (\mu_i(a_i\delta_g)).
	\end{align*}
	If \(a\in \overline{\bigcup_{i=1}^{\infty}\mu_{i}(A^{(i)})} = A\), then after passing to the limit, we have
	\[
	(\varinjlim \tau_i) \circ E (a\delta_g) = \varinjlim (\tau_i \circ E_i) (a\delta_g).
	\]
	Therefore, we have proved the following proposition. 
	\begin{proposition}\label{prop:trace}
		Let \(\big((A^{(i)}, G, \alpha^{(i)}), \phi_i  \big)_{i\in \N}\) be an inductive sequence of partial dynamical systems with the inductive limit partial dynamical system \((A, G,\alpha)\). 
		Suppose \((\tau_i)_{i\in \N}\) is a \(G\)\nb-invariant inductive sequence of tracial states on \((A^{(i)}, \phi_i)_{i\in \N}\). Then \((\varinjlim \tau_i) \circ E= \varinjlim (\tau_i\circ E_i)\).
	\end{proposition}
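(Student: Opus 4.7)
The plan is to show that the two tracial states $(\varinjlim \tau_i)\circ E$ and $\varinjlim(\tau_i\circ E_i)$ on $A\rtimes_\alpha G$ agree on a dense subset, and then conclude by continuity. Both functionals are bounded linear functionals on the same $\Cst$-algebra $A\rtimes_\alpha G$ (where we identify $\varinjlim A^{(i)}\rtimes_{\alpha^{(i)}}G$ with $A\rtimes_\alpha G$ via Theorem~\ref{thm-ind-par-crosd-pord}), so it suffices to check equality on a total subset.

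First, I would assemble the two objects. Lemma~\ref{lem:trace-ind-lim} yields the $G$-invariant tracial state $\tau=\varinjlim\tau_i$ on $A$, and Lemma~\ref{lem:G-inv-trace} then guarantees that $\tau\circ E$ is a tracial state on $A\rtimes_\alpha G$. On the other hand, each $\tau_i\circ E_i$ is a tracial state on $A^{(i)}\rtimes_{\alpha^{(i)}}G$ (again by Lemma~\ref{lem:G-inv-trace}, applied at each level). The compatibility $(\tau_{i+1}\circ E_{i+1})\circ \psi_i=\tau_i\circ E_i$, which is a one-line check using $E_{i+1}\circ \psi_i = \phi_i\circ E_i$ together with $\tau_{i+1}\circ\phi_i=\tau_i$, shows that $(\tau_i\circ E_i)_{i\in\N}$ is an inductive sequence of tracial states on the inductive sequence $(A^{(i)}\rtimes_{\alpha^{(i)}}G,\psi_i)_{i\in\N}$, so Lemma~\ref{lem:trace-ind-lim} (applied to the trivial $G$-action, or simply using continuity as in its proof) produces $\varinjlim(\tau_i\circ E_i)$ as a tracial state on $\varinjlim A^{(i)}\rtimes_{\alpha^{(i)}}G\cong A\rtimes_\alpha G$.

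Next, I would verify the equality on elements of the form $\mu_i(a_i)\delta_g$ with $a_i\in A^{(i)}_g$. Under the isomorphism of Theorem~\ref{thm-ind-par-crosd-pord}, $\mu_i(a_i)\delta_g$ corresponds to $\lambda_i(a_i\delta_g)$. A direct case split on whether $g=e$ then gives
\begin{align*}
(\varinjlim\tau_i)\circ E\bigl(\mu_i(a_i)\delta_g\bigr) &= \begin{cases}\tau(\mu_i(a_i)) & g=e\\ 0 & g\neq e\end{cases}\\
&= \begin{cases}\tau_i(a_i) & g=e\\ 0 & g\neq e\end{cases}= (\tau_i\circ E_i)(a_i\delta_g) = \varinjlim(\tau_i\circ E_i)\bigl(\mu_i(a_i)\delta_g\bigr),
\end{align*}
where the first step uses the defining formula~\eqref{equ-cond-exp} for $E$, and the second step uses Lemma~\ref{lem:trace-ind-lim}.

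Finally, since the linear span of $\{\mu_i(a_i)\delta_g : i\in\N,\, g\in G,\, a_i\in A^{(i)}_g\}$ is dense in $A\rtimes_\alpha G$ (because $A_g=\overline{\bigcup_i \mu_i(A^{(i)}_g)}$ and finite sums with coefficients in $A_g$ are dense in $A\rtimes_\alpha G$), continuity of both tracial states forces $(\varinjlim\tau_i)\circ E = \varinjlim(\tau_i\circ E_i)$. No real obstacle arises here; the only point requiring minor care is keeping track of the identifications provided by Theorem~\ref{thm-ind-par-crosd-pord} so that both functionals are genuinely being compared on the same algebra.
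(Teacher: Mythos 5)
Your proposal is correct and follows essentially the same route as the paper: both construct the two tracial states via Lemma~\ref{lem:trace-ind-lim} and Lemma~\ref{lem:G-inv-trace}, check equality on elements $\mu_i(a_i)\delta_g$ by a case split on $g=e$, and conclude by density and continuity. The extra details you supply (the explicit compatibility check $(\tau_{i+1}\circ E_{i+1})\circ\psi_i=\tau_i\circ E_i$ and the careful tracking of the identification from Theorem~\ref{thm-ind-par-crosd-pord}) are points the paper leaves implicit, but they do not change the argument.
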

	
	\medskip
	
	\paragraph{\itshape Acknowledgement:}
	We thank the anonymous referee for their valuable comments and suggestions, which improved the clarity and readability of the manuscript. We also thank the Institute of Mathematical Sciences (IMSc) for providing financial support.


\begin{thebibliography}{10}
		
		\bibitem{Abadie-Gradella2022-Partial-Rokh-dim}
		Fernando Abadie, Eusebio Gardella, and Shirly Geffen.
		\newblock Partial {C}*-dynamics and {R}okhlin dimension.
		\newblock {\em Ergodic Theory Dynam. Systems}, 42(10):2991--3024, 2022.
		
		\bibitem{Anantharaman-D:2020Partial-Action-of-Gpd}
		Claire Anantharaman-Delaroche.
		\newblock Partial actions of groupoids.
		\newblock working paper or preprint, hal-02429386, January 2020.
		
		\bibitem{Exel1994-Bunce-Deddens-alg-as-crossed-product-by-partial-Automorphism}
		Ruy Exel.
		\newblock The {B}unce-{D}eddens algebras as crossed products by partial
		automorphisms.
		\newblock {\em Bol. Soc. Brasil. Mat. (N.S.)}, 25(2):173--179, 1994.
		
		\bibitem{Exel-1994-Circle-action-Cst-alg}
		Ruy Exel.
		\newblock Circle actions on {$C^*$}-algebras, partial automorphisms, and a
		generalized {P}imsner-{V}oiculescu exact sequence.
		\newblock {\em J. Funct. Anal.}, 122(2):361--401, 1994.
		
		\bibitem{Exel-1995-AF-alg-and-partial-action}
		Ruy Exel.
		\newblock Approximately finite {$C^\ast$}-algebras and partial automorphisms.
		\newblock {\em Math. Scand.}, 77(2):281--288, 1995.
		
		\bibitem{Exel2017Book-Partial-action-Fell-bundle}
		Ruy Exel.
		\newblock {\em Partial dynamical systems, {F}ell bundles and applications},
		volume 224 of {\em Mathematical Surveys and Monographs}.
		\newblock American Mathematical Society, Providence, RI, 2017.
		
		\bibitem{Gardella-Hirshberg-Santiago2021Rokhlin-dim-Crossed-prod}
		Eusebio Gardella, Ilan Hirshberg, and Luis Santiago.
		\newblock Rokhlin dimension: duality, tracial properties, and crossed products.
		\newblock {\em Ergodic Theory Dynam. Systems}, 41(2):408--460, 2021.
		
		\bibitem{Phillips2011-Crossed-prod-book}
		Thierry Giordano, David Kerr, N.~Christopher Phillips, and Andrew Toms.
		\newblock {\em Crossed products of {$C^*$}-algebras, topological dynamics, and
			classification}.
		\newblock Advanced Courses in Mathematics. CRM Barcelona.
		Birkh\"{a}user/Springer, Cham, 2018.
		\newblock Lecture notes based on the course held at the Centre de Recerca
		Matem\`atica (CRM) Barcelona, June 14--23, 2011.
		
		\bibitem{McClanahan-1995-K-theory-Par-act}
		Kevin McClanahan.
		\newblock {$K$}-theory for partial crossed products by discrete groups.
		\newblock {\em J. Funct. Anal.}, 130(1):77--117, 1995.
		
		\bibitem{Rordam2002K-theory-book}
		M.~R\o~rdam, F.~Larsen, and N.~Laustsen.
		\newblock {\em An introduction to {$K$}-theory for {$C^*$}-algebras}, volume~49
		of {\em London Mathematical Society Student Texts}.
		\newblock Cambridge University Press, Cambridge, 2000.
		
		\bibitem{Scarparo2017-Supramenable-group-partial-act}
		Eduardo~P. Scarparo.
		\newblock Supramenable groups and partial actions.
		\newblock {\em Ergodic Theory Dynam. Systems}, 37(5):1592--1606, 2017.
		
	\end{thebibliography}
\end{document}